  \newcounter{itemizedlistcounter}  
\author[F. \& J.\, Cabello S\'anchez]{F\'elix Cabello S\'anchez}
\author[]{Javier Cabello S\'anchez}
\address{Departamento de Matem\'aticas and IMUEx, Universidad de Extremadura\\
Avenida de Elvas, 06071-Badajoz, Espa\~na.
\newline
Orcid Id: 0000--0003--0924--5189, 0000--0003--2687--6193}
\email{fcabello@unex.es, coco@unex.es}
\newcommand{\eps}{\varepsilon}
\newcommand{\To}{\longrightarrow}
\newcommand {\R}{\mathbb R}
\renewcommand{\leq}{\ensuremath{\leqslant}}
\renewcommand{\geq}{\ensuremath{\geqslant}}
\newcommand{\Aut}{\operatorname{Aut}}
\newtheorem{thm}{Theorem}[section]
\newtheorem{theorem}[thm]{Theorem}
\newtheorem{lemma}[thm]{Lemma}
\newtheorem{prop}[thm]{Proposition}
\newtheorem*{claim*}{Claim}
\theoremstyle{definition}
\newtheorem{definition} [thm] {Definition}
\newtheorem{remark}[thm]{Remark}
\newtheorem*{prob*}{Problem}
\newtheorem{example}[thm]{Example}
\title[Dynamics of the semigroup of contractive automorphisms]{Dynamics of the semigroup of contractive automorphisms of Banach spaces}
\keywords{Mazur rotations problem; contractive automorphism; semitransitivity; Banach space; Hilbert space}
\thanks{2020 {\it Mathematics Subject Classification}. 46B03, 46B04, 46C15}
\thanks{Supported in part by PID2019-103961GB-C21 and Junta de Extremadura, Project IB20038.}
\begin{document}

\begin{abstract}
Motivated by some recent twaddles on Mazur rotations problem, we study the 
``dynamics'' of the semigroup of contractive automorphisms of Banach spaces, 
mostly in finite-dimensional spaces. We focus on the metric aspects of the ``action'' 
of such semigroups, the size of the orbits and semitransitivity properties, 
and their impact on the geometry of the unit ball of the underlying space.
\end{abstract}

\noindent{{\tiny{\boxed{\text{Version \today}}}}

\bigskip

\maketitle

\section{Introduction}

This paper studies the ``dynamics'' of the semigroup of contractive automorphisms of finite dimensional normed spaces.
Our interest in this subject, and even the topic itself, stems from Mazur rotations problem: \emph{Is every separable Banach space whose group of linear isometries acts transitively on the unit sphere isometric (or isomorphic) to a Hilbert space?}  (Cf. \cite[remarque \`a la section 5 du chapitre XI]{Banach}.)

We are not going to go into this issue, firstly because there is a very recent survey paper on the subject and secondly because we believe that our study 
is (moderately) interesting in its own right.  In any case, we recommend the reader to have a look at the papers \cite{becerra, CFR, BRP,  CDKKLM}.

We consider only real spaces; most of the time we work in  finite dimensions, often in the plane.
In particular $\ell_p^n$ denotes $\R^n$ with the norm $\|x\|_p= \big(\sum_{i=1}^n |x_i|^p \big)^{1/p}$ for $1\leq p<\infty$, while $\|x\|_\infty=\max_{i\leq n}|x_i|$, and $e_i$ denotes the corresponding unit vector of $\R^n$.

Given  a (real) normed space $X$, its (closed unit) ball is the set $B=\{x\in X:\|x\|\leq 1\}$ and $S=\{x\in X:\|x\|= 1\}$ is the unit sphere. We write $L(X)$ for the space of all (bounded, linear) endomorphisms of $X$ with the operator norm
$\|T\|=\sup_{\|x\|\leq 1}\|T(x)\|$.
 An operator $T$ on $X$ is an \emph{automorphism} if there is  $L\in L(X)$ such that $TL=LT={\bf I}_X$. An operator $T$ is \emph{contractive}, or a contraction, if $\|T\|\leq 1$. This means that $TB\subset B$.

Clearly, the contractive automorphisms of $X$ form a semigroup of $L(X)$ that we will denote by $\Aut_1(X)$. We are interested in the ``action'' of $\Aut_1(X)$ on the unit sphere of $X$, especially in the size of the ``orbits''
 $
O(x)=\{y\in S: y=Tx \text{ for some }T\in \Aut_1(X)\}
$ 
for $x\in S$. Note that if $\|y\|<\|x\|$ then there is always $T\in \Aut_1(X)$ such that $y=Tx$, so considering only points of $S$ is fine. Life without local convexity can be much harder, see \cite{roberts} for examples of {\em rigid} quasi Banach spaces.

The space $\ell_1^2$ already provides a quite interesting example (see Figure~1 and the comments following Theorem~\ref{th:ST}):

\begin{itemize}
\item $O(x)=S\iff x=\pm e_i$ for some $i=1,2$.
\item $O(x)=S\backslash\{\pm e_1, \pm e_2\}$ for any other $x\in S$.
\item For each $x\in S$ the set $O(x)$ is a neighbourhood of $x$ relative to $S$.
\end{itemize}

\section{Semitransitivity}

A Banach space $X$ is said to be semitransitive (ST) if for every $x,y\in S$ 
there is $T\in \Aut_1(X)$  such that $y=Tx$. Hilbert spaces are ST: 
actually they are even {\em transitive}, that is, $T$ can be taken to be 
isometric, that is, $\|T\|=\|T^{-1}\|=1$. This quickly follows from the 2D case and 
the existence of orthogonal complements. Any other example in sight? 
Not yet. To understand what the issue is really about 
we need to introduce a couple of definitions.  

\begin{wrapfigure}{r}{0.45\textwidth}
  \begin{center}\vspace{-25pt}
    \includegraphics[width=0.45\textwidth]{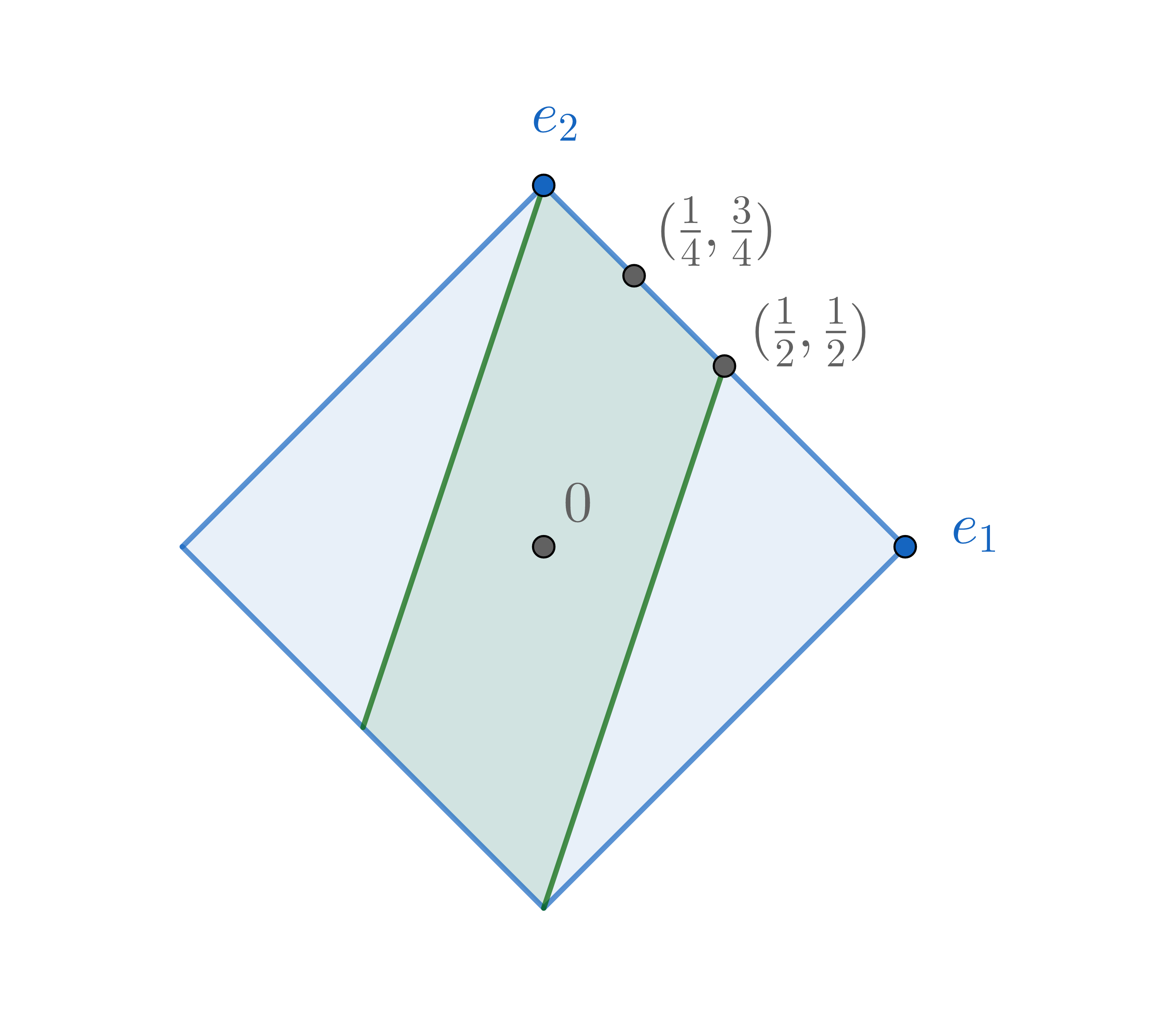}
  \end{center}
\caption{The matrix $\frac{1}{2} \binom{1 \; 0}{1 \; 2}$ implements a contractive automorphism of $\ell_1^2$ that sends $(\frac{1}{2}, \frac{1}{2})$ 
to  $(\frac{1}{4}, \frac{3}{4})$. If we fix $e_1$ and let $e_2$ slip towards 
$e_1$ we can send $(\frac{1}{4}, \frac{3}{4})$ back to $(\frac{1}{2}, \frac{1}{2})$, using $\frac{1}{3} \binom{3 \; 1}{0 \; 2}$.}
\end{wrapfigure}

Let $X$ be a finite dimensional (FD) space (or even a Banach space isomorphic 
to a Hilbert space) with unit ball $B$ and unit sphere $S$. An ellipsoid in 
$X$ is the closed unit ball (centered at the origin) of an Euclidean norm 
equivalent to the original norm of $X$. Ellipsoids are invariably assumed 
to be centered at the origin. If $X=\R^n$ with some norm, then a disc 
(centered at $x$) is a set of the form $D=\{y\in\R^n: \|y-x\|_2\leq r\}$, 
where $r>0$ and $\|\cdot\|_2$ is the {\em usual} Euclidean norm.

We say that an ellipsoid $E$ is inner at $x\in S$ if $E\subset B$ and $x\in E$. 
We say that $E$ is outer at $x$ if $B\subset E$ and $x\in\partial E$. The 
following duality argument will be used over and over without further mention: 
if $x\in X$ and $x^*\in X^*$ are such that $\|x\|=\|x^*\|= \langle x^*,x\rangle =1$, 
then $E$ is inner (respectively, outer) at $x$  if and only if the dual ellipsoid 
$E^*=\{y^*\in X^*: |\langle y^*, y\rangle |\leq 1\; \forall\, y\in E\}$ is outer 
(respectively, inner) at $x^*$.

Those (nonzero) points where the norm is (G\^ateaux) differentiable are called smooth.
This happens to $x$ if and only if there is exactly one functional $x^*$ such 
that $\|x\|=\|x^*\|$ and $ \langle x^*,x\rangle =\|x\|\|x^*\|$. Such an $x^*$ is called a support 
functional,  necessarily agrees with  $\tfrac{1}{2}d_x\|\cdot\|^2$, and is often 
denoted by $J(x)$. If the norm is differentiable off the origin the space $X$ itself 
is called smooth and the mapping $J:X\To X^*$ sending each point into its unique 
support functional (and $0$ to $0$) is called the duality map.

It is clear that points that admit inner ellipsoids are smooth, while those 
that have outer ellipsoids are extreme. 
The following remark is obvious, yet very useful:

\begin{lemma}\label{lem:yinOx}
Let $X$ be a finite dimensional space and assume $y\in O(x)$.
\begin{itemize}
\item If $x$ is a smooth point of $S$ then so is $y$.
\item If $y$ is an extreme point of $B$ then so is $x$.
\item If $x$ admits an inner ellipsoid, then so $y$ does.
\item If $y$ admits an outer ellipsoid, then so $x$ does.
\end{itemize}
\end{lemma}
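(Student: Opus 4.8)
My plan is to lean on just two structural features of an automorphism $T \in \Aut_1(X)$ realizing $y = Tx$: it is injective (being invertible), and it is a contraction, so that $TB \subseteq B$ and, correspondingly, the adjoint $T^*$ is again a contractive automorphism, now of $X^*$. A third elementary ingredient is that every linear automorphism carries centered ellipsoids onto centered ellipsoids. With these three facts the four items come in two flavours: (1) and (3) run \emph{forward}, from $x$ to $y$, and should be handled by pushing objects through $T$; (2) and (4) run \emph{backward}, from $y$ to $x$, and I would handle them by using $T^*$ or $T^{-1}$ instead of $T$.

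For the smoothness item I would argue through support functionals. If $y^*$ supports $B$ at $y$, then $\langle T^* y^*, x\rangle = \langle y^*, Tx\rangle = \langle y^*, y\rangle = 1$, while $\|T^* y^*\| \le \|T^*\|\,\|y^*\| = \|T\| \le 1$; hence $T^* y^*$ supports $B$ at $x$. Thus $y^* \mapsto T^* y^*$ carries support functionals of $y$ into support functionals of $x$, and it is injective because $T^*$ is. So if $x$ has a unique support functional, then $y$ has at most one; since Hahn--Banach furnishes at least one, $y$ is smooth.

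The extreme-point item is the most transparent. If $x = \tfrac{1}{2}(u+v)$ with $u,v \in B$, then $y = Tx = \tfrac{1}{2}(Tu + Tv)$ with $Tu, Tv \in B$ by contractivity; extremality of $y$ forces $Tu = Tv = y$, and injectivity of $T$ then gives $u = v = x$, so $x$ is extreme. For the ellipsoid items I would push forward and pull back. If $E$ is inner at $x$, then $TE$ is a centered ellipsoid with $TE \subseteq TB \subseteq B$ and $y = Tx \in TE$, hence inner at $y$. If $E$ is outer at $y$, then $T^{-1}E$ is a centered ellipsoid; the inclusion $B \subseteq T^{-1}E$ is equivalent to $TB \subseteq E$, which holds since $TB \subseteq B \subseteq E$; and $x = T^{-1}y \in \partial(T^{-1}E)$ because $T^{-1}$ is a linear homeomorphism taking $\partial E$ onto $\partial(T^{-1}E)$. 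So $T^{-1}E$ is outer at $x$.

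I do not anticipate a genuine obstacle: the statement is essentially bookkeeping around the two properties of $T$. The one point to keep straight is exactly the asymmetry of the four claims. Contractivity alone is what yields the inclusions $TB \subseteq B$, while invertibility is what allows the backward items (2) and (4) to be transported by $T^*$ and $T^{-1}$; conflating the forward and backward directions, or forgetting that injectivity is needed in (2) and that $T^{-1}$ rather than $T$ must be used in (4), is the only way this short argument could go wrong.
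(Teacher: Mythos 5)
Your proof is correct; the paper itself states this lemma as ``obvious'' and gives no argument, and what you have written is exactly the bookkeeping the authors had in mind (push inner ellipsoids and convex combinations forward through the contraction $T$, pull outer ellipsoids and support functionals back through $T^{-1}$ and $T^*$). All four items check out, including the small implicit step that $\|T^*y^*\|=1$ because it attains the value $1$ at the unit vector $x$.
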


The following result is almost obvious, just using compactness:

\begin{lemma}\label{lem:hay}
In a finite dimensional space, the set of points admitting inner ellipsoids is 
dense in the unit sphere and the set of points admitting outer ellipsoids is nonempty.
\end{lemma}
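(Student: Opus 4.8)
The plan is to prove the two assertions separately: the outer statement falls out of compactness in one stroke, while the inner (density) statement requires inscribed ellipsoids that are very elongated toward the target point. For the outer part, fix any ellipsoid $E_0$ with associated Euclidean norm $\|\cdot\|_0$ (so $E_0=\{y:\|y\|_0\le1\}$) and set $\lambda^*=\max_{x\in S}\|x\|_0$; the maximum is attained at some $x\in S$ by compactness. Then $\|y\|_0\le\lambda^*$ for every $y\in S$, so $S\subseteq\lambda^* E_0$ and hence $B\subseteq\lambda^* E_0$, while $\|x\|_0=\lambda^*$ gives $x\in\partial(\lambda^* E_0)$. Thus $\lambda^* E_0$ is outer at $x$, and the set of points with outer ellipsoids is nonempty.

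For the inner part, fix $x_0\in S$ and a relative neighbourhood $U$ of $x_0$ in $S$; I want to produce a point of $U$ admitting an inner ellipsoid. Put $u=x_0/\|x_0\|_2$ and, for $R>1$, let $E_R$ be the ellipsoid with Euclidean semi-axis $R$ along $u$ and semi-axis $1$ in the orthogonal hyperplane $u^\perp$ --- a long ``cigar'' aimed at $x_0$. Let $M_R=\max_{z\in E_R}\|z\|$ (attained and positive, by compactness) and $\lambda_R=1/M_R$. Then $\lambda_R E_R$ is the largest dilate of $E_R$ contained in $B$, it is an origin-centred ellipsoid, and if $z_R\in E_R$ realises $M_R$ then $x_R:=\lambda_R z_R$ satisfies $\|x_R\|=\lambda_R M_R=1$, so $x_R\in S\cap\lambda_R E_R$ and $\lambda_R E_R$ is inner at $x_R$. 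It remains to push $x_R$ into $U$ by taking $R$ large.

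Since the tip $R\,u$ belongs to $E_R$, we have $M_R\ge\|R\,u\|=R\|u\|=R/\|x_0\|_2$ (recall $\|x_0\|=1$), whence $\lambda_R\le\|x_0\|_2/R\to0$. Thus $\lambda_R E_R$ is squeezed to Euclidean width at most $\lambda_R\to0$ in the directions of $u^\perp$, so it lies in a shrinking tube about the segment joining $-x_0$ and $x_0$. As $\|t x_0\|=|t|<1$ for $|t|<1$, the open segment $(-x_0,x_0)$ is interior to $B$; hence no point of $S$ can sit near the interior of that segment, and the contact point $x_R$ is forced to within $o(1)$ of $\{-x_0,x_0\}$. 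Finally $B$ and $E_R$ are symmetric about the origin, so contact points come in antipodal pairs, and one of them lies within $o(1)$ of $x_0$ itself; for $R$ large this point belongs to $U$ and carries the inner ellipsoid $\lambda_R E_R$. As $x_0$ and $U$ were arbitrary, inner-ellipsoid points are dense in $S$.

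The routine ingredients --- equivalence of $\|\cdot\|$ and $\|\cdot\|_2$, and attainment of the various maxima --- are precisely what the phrase ``just using compactness'' refers to, and are harmless. The single step that uses the geometry, and which I expect to be the only real obstacle, is the localisation of the contact set: a fixed inscribed ellipsoid touches $S$ at uncontrolled points, so the whole argument hinges on elongating the ellipsoid toward $x_0$ and checking that the contact is thereby dragged to $x_0$. This is exactly where one invokes that the diameter of $B$ through $x_0$ meets $S$ only at its two endpoints, confining the contact of the thin cigar to a neighbourhood of $\pm x_0$.
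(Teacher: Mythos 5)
Your proof is correct and complete; the paper itself offers no argument for this lemma beyond the remark that it is ``almost obvious, just using compactness'' (deferring to \cite[Lemma 3.4]{wheeling} for the infinite-dimensional version), and what you have written is exactly the standard argument that remark is gesturing at. The outer part is the one-line maximal-dilate computation, and for the inner part the elongated ``cigar'' $E_R$ aimed at $x_0$ --- together with the observation that its largest inscribed dilate can only touch $S$ near $\pm x_0$, because the open diameter $(-x_0,x_0)$ lies in the interior of $B$ and the dilate collapses onto that segment as $R\to\infty$ --- is the right localization device, with the antipodal-symmetry step at the end correctly converting ``near $\pm x_0$'' into ``near $x_0$''.
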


See \cite[Lemma 3.4]{wheeling} for an extension to infinite dimensional spaces. 
The following characterization is in turn a particularization of \cite[Proposition 3.5]{wheeling}.

\begin{theorem}\label{th:ST}
For a finite dimensional space $X$ the following are equivalent:
\begin{itemize}
\item[{\rm (a)}] $X$ is semitransitive. 
\item[{\rm (b)}] Every point of the unit sphere admits inner and outer ellipsoids.
\item[{\rm (c)}] $X^*$ is semitransitive. 
\end{itemize}
\end{theorem}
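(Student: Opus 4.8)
The plan is to establish the cycle $(a)\Rightarrow(b)\Rightarrow(a)$ and to obtain $(c)$ from the fact that condition $(b)$ is self-dual. For $(a)\Rightarrow(b)$ I would use the two lemmas directly: if $X$ is semitransitive then $O(x)=S$ for every $x$, while by Lemma~\ref{lem:hay} some $x_{0}$ admits an inner ellipsoid and some $y_{0}$ admits an outer ellipsoid. Given an arbitrary $y\in S$ we have $y\in O(x_{0})$, so the third item of Lemma~\ref{lem:yinOx} produces an inner ellipsoid at $y$; and $y_{0}\in O(y)$, so the fourth item produces an outer ellipsoid at $y$. Thus every point admits both, which is $(b)$. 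For the equivalence with $(c)$ I would note that, by the duality recalled before the statement, $E$ is inner at $x$ iff $E^{*}$ is outer at $x^{*}=J(x)$ and $E$ is outer at $x$ iff $E^{*}$ is inner at $x^{*}$; since $(b)$ forces $X$ to be smooth and rotund, $J\colon S\to S^{*}$ is a homeomorphism, so $X$ satisfies $(b)$ exactly when $X^{*}$ does. Granting $(a)\Leftrightarrow(b)$ for every finite dimensional space and applying it to $X^{*}$ yields $(c)\Leftrightarrow[(b)\text{ for }X^{*}]\Leftrightarrow[(b)\text{ for }X]\Leftrightarrow(a)$.

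The heart is $(b)\Rightarrow(a)$, which I would prove by showing that under $(b)$ each orbit $O(x)$ is relatively clopen in $S$; as $S$ is connected for $\dim X\ge2$ (the cases $\dim X\le1$ being trivial) and $x\in O(x)$, this forces $O(x)=S$ and hence semitransitivity. Openness I would deduce from a \emph{local mobility} statement: if $x$ admits an inner ellipsoid then $O(x)$ is a neighbourhood of $x$ in $S$. Granting this, take any $w\in O(x)$; by $(b)$ the point $w$ admits an inner ellipsoid, so $O(w)$ is a neighbourhood of $w$, and composing contractive automorphisms gives $O(w)\subseteq O(x)$. Thus $O(x)$ contains a neighbourhood of each of its points, so it is open.

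For closedness I would argue by duality rather than repeat the construction. Smoothness gives the key relation: if $T\in\Aut_{1}(X)$ and $Tz=y\in S$, then from $1=\langle J(y),Tz\rangle=\langle T^{*}J(y),z\rangle\le\|T^{*}J(y)\|\,\|z\|\le1$ one reads off $T^{*}J(y)=J(z)$. Since $T\mapsto T^{*}$ maps $\Aut_{1}(X)$ onto $\Aut_{1}(X^{*})$ and $J_{X^{*}}=J^{-1}$, this yields the equivalence $y\in O(z)\Leftrightarrow J(z)\in O_{X^{*}}(J(y))$, so through the homeomorphism $J$ the up-set $U(y)=\{z\in S:y\in O(z)\}$ is carried onto $O_{X^{*}}(J(y))$. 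By $(b)$ the point $y$ has an outer ellipsoid, hence $J(y)$ has an inner ellipsoid in $X^{*}$, and local mobility applied in $X^{*}$ makes $O_{X^{*}}(J(y))$ a neighbourhood of $J(y)$; therefore $U(y)$ is a neighbourhood of $y$. Finally, if $y\notin O(x)$ then no $z\in U(y)$ can lie in $O(x)$, since $y\in O(z)$ and $z\in O(x)$ would compose to $y\in O(x)$; thus $U(y)\subseteq S\setminus O(x)$, the complement is open, and $O(x)$ is closed.

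The step I expect to be the real obstacle is the local mobility statement, that is, producing for each $y\in S$ close to a point $x$ carrying an inner ellipsoid an \emph{invertible} contraction with $Tx=y$. Invertibility is precisely what is delicate: the rank-one map $y\otimes J(x)$ is a contraction sending $x$ to $y$ but is never an automorphism in dimension $\ge2$, while Euclidean rotations of the inner ellipsoid fix that ellipsoid yet need not respect the norm of $X$, so they are not contractions. The inner ellipsoid therefore has to be spent on perturbing a norm-attaining map into an invertible one without destroying contractivity, and the balance has to be controlled to second order near the exposed point $x$, where the support functional flattens quadratically. This is the finite dimensional instance of \cite[Proposition~3.5]{wheeling}, whose argument I would follow to complete the proof.
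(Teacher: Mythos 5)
Your handling of (a)$\implies$(b) and of the duality with (c) agrees with the paper's. The problem is the implication (b)$\implies$(a): you reduce everything to the ``local mobility'' statement, you correctly identify it as the one genuinely hard point, and then you do not prove it --- you defer to \cite[Proposition~3.5]{wheeling}. Since that is exactly where the content of the implication lives, the argument as written has a real gap, and the surrounding machinery (clopen orbits, connectedness of $S$, the up-set $U(y)$ carried by $J$ onto an orbit in $X^*$) is scaffolding around a hole. It is also far more than is needed.

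The missing idea is a single composition that dissolves the difficulty you describe. Under (b), given arbitrary $x,y\in S$, let $F$ be an outer ellipsoid at $x$ and $E$ an inner ellipsoid at $y$. Since all Euclidean structures on $X$ are linearly isometric, there is an automorphism $L$ with $LF=E$; then $LB\subset LF=E\subset B$, so $L$ is a contractive automorphism, and $Lx\in\partial E$ because $x\in\partial F$. Now invoke the transitivity of the Euclidean norm whose unit ball is $E$: there is an isometry $R$ of that norm with $R(Lx)=y$. You observed, correctly, that such an $R$ need not be a contraction of $\|\cdot\|$ on all of $B$ --- but it does not have to be: it preserves $E$, and $LB$ already sits inside $E$, so $T=RL$ satisfies $TB\subset RE=E\subset B$, is invertible, and sends $x$ to $y$. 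This shows $y\in O(x)$ for \emph{all} pairs $x,y\in S$ at once, so $O(x)=S$ with no openness, closedness, or connectedness argument; it also yields your local mobility statement as a by-product, though in this finite-dimensional setting one never needs it in local form. In short, the ``balance controlled to second order near the exposed point'' that you anticipated is not needed: the two ellipsoids are spent globally, one as the domain of $L$ and the other as the invariant ball of $R$.
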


\begin{proof}
The implication (a)$\implies$(b) is clear after Lemma~\ref{lem:yinOx} and \ref{lem:hay}.
The converse (b)$\implies$(a) follows from the fact that {\em if $y\in S$ admits 
an inner ellipsoid and $x\in S$ admits an outer ellipsoid, then $y\in O(x)$}. 
Indeed, let $E$ be inner at $y$ and $F$ outer at $x$. Since all Hilbert spaces 
of the same dimension are isometric there is $L\in L(X)$ which maps $F$ onto $E$. 
Clearly $Lx\in \partial E$  and by transitivity there is an isometry $R$ of the 
norm associated to $E$ such that $R(Lx)=y$. The composition $T=RL$ is a contraction 
since $TB\subset B$.

On the other hand, by the remark preceding Lemma~\ref{lem:yinOx},  $X$ satisfies (b) if and only if $X^*$ does, hence (b) is equivalent to (c) too.
\end{proof}

The above criterion shows that for $n\geq 2$ and $p\neq 2$ the spaces $\ell_p^n$ are not ST (if $p<2$ the unit vectors do not have inner ellipsoids;  if $p>2$ they do not have outer ellipsoids). The paper \cite{wheeling} contains examples of ST norms on the plane which were constructed with slightly different purposes, using sledgehammers to crack nuts. Example~\ref{ex:splicing} below is much simpler; Example~\ref{ex:noBT}  exhibits a rather unexpected behaviour.

Let us call $y\in S$ {\em flat} if there exists some homogeneous hyperplane $H\subset X$  
such that $(y+H)\cap S$ is a neighbourhood of $y$ relative to $S$ ---note that $H=\ker J(y)$. If $y$ is flat, then $y\in O(x)$ for all $x\in S$. Indeed, let $x^*$ be a norm-one functional such that $\langle x^*, x\rangle=1$, put $H_x=\ker x^*$, and let $L:H_x\To H$ be any linear isomorphism. If $\eps>0$ is sufficiently small, the automorphism $T$ given by $T(cx+u)=cy+\eps L(u)$, where $c\in\R, u\in H_x$, is   contractive ---and sends $x$ to $y$, of course.

Call $x\in S$ pilgrim if $O(x)$ is dense in $S$. It is clear from Lemma~\ref{lem:hay} that 
every FD normed space has pilgrim points. As for the ``size'' of the set of 
pilgrim points it is nearly obvious that $X$ is strictly convex and smooth, then
the set of pilgrim points is dense in the unit sphere of $X$. See why? 
 On the other hand, we have just seen that the orbit 
of any flat point consists precisely of the set of flat points of $S$ and this means 
that in any polyhedral space the set of pilgrim points is dense in the sphere. The same is true for spaces whose unit ball is the the intersection of two ellipsoids.

\begin{wrapfigure}{r}{0.45\textwidth}
  \begin{center}
   \vspace{-20pt} \includegraphics[width=0.5\textwidth]{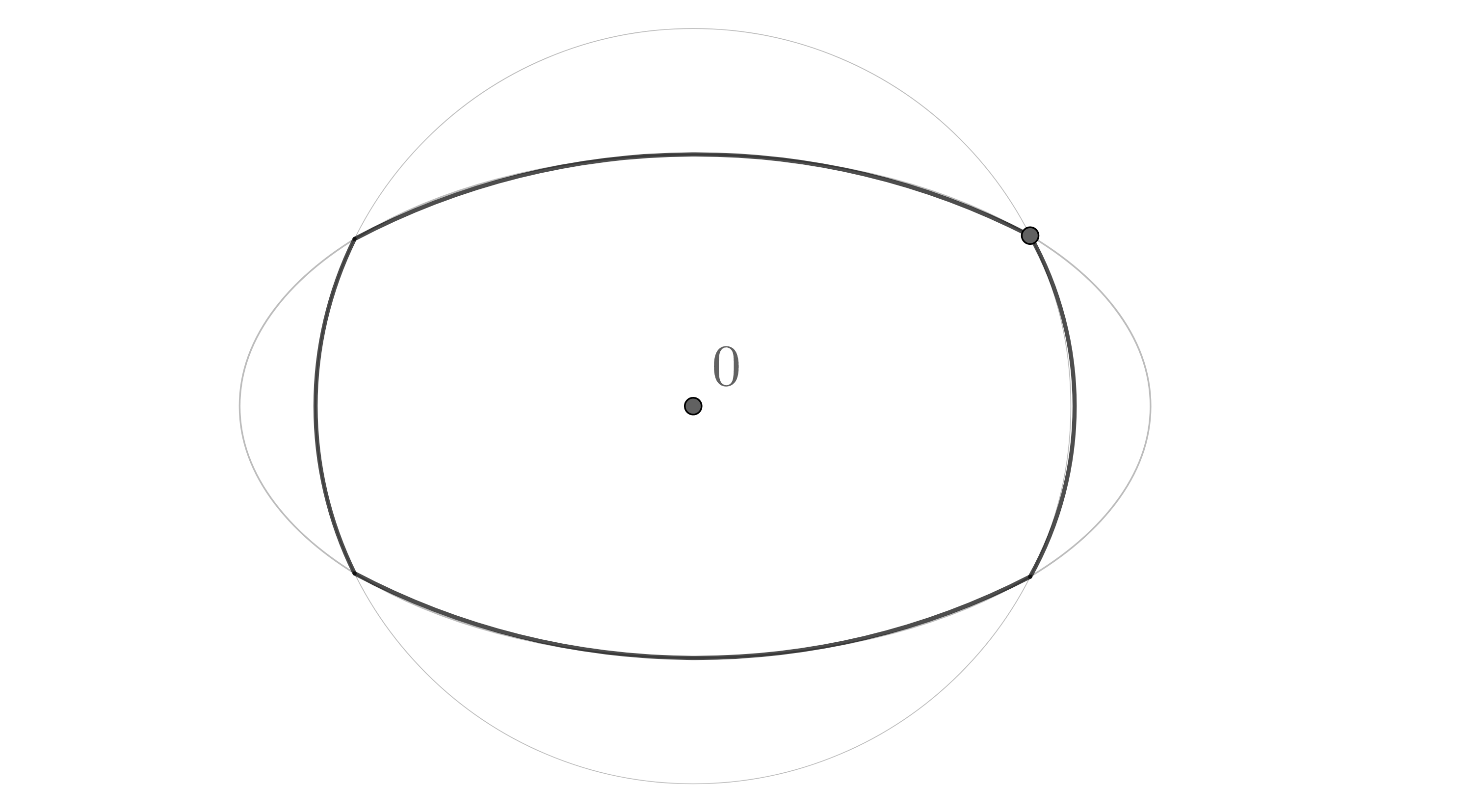}
  \end{center}
\caption{Intersection of two ellipsoids}
\end{wrapfigure}

It is also clear that a FD space whose set of pilgrim points is dense in the unit 
sphere must must have some kind of regularity: if we mix 
a polyhedral norm with some strictly convex norm, we loose the density of 
the set of pilgrim points ---think of the norm in $\R^2$ that agrees with $\|\cdot\|_2$ 
in the first quadrant and with $\|\cdot\|_1$ in the second one.

The following example shows that some natural conjectures that one might come up with are false:

\begin{example}
{A uniformly convex and smooth normed space whose sphere is not the orbit of any point}.
Pick $1<p<2<q<\infty$ and define a norm on $\R^2$ letting
\begin{equation*}
\|x\|=\begin{cases}
\|x\|_p &\text{ if } x_1x_2\geq 0,\\
\|x\|_q &\text{ if } x_1x_2\leq 0.
\end{cases}
\end{equation*}
All points on the sphere admit both inner and outer ellipsoids, except $\pm e_i$ which  have neither inner ellipsoids ($p$ is too small) nor outer ($q$ is too large). The inexorable conclusion is that if $x\neq \pm e_i$, then neither $x$ can be in the orbit of $\pm e_i$ nor $\pm e_i$ in the orbit of $x$.
\end{example}

\begin{wrapfigure}{r}{0.45\textwidth}
  \begin{center}
  \vspace{-10pt}
    \includegraphics[width=0.35\textwidth]{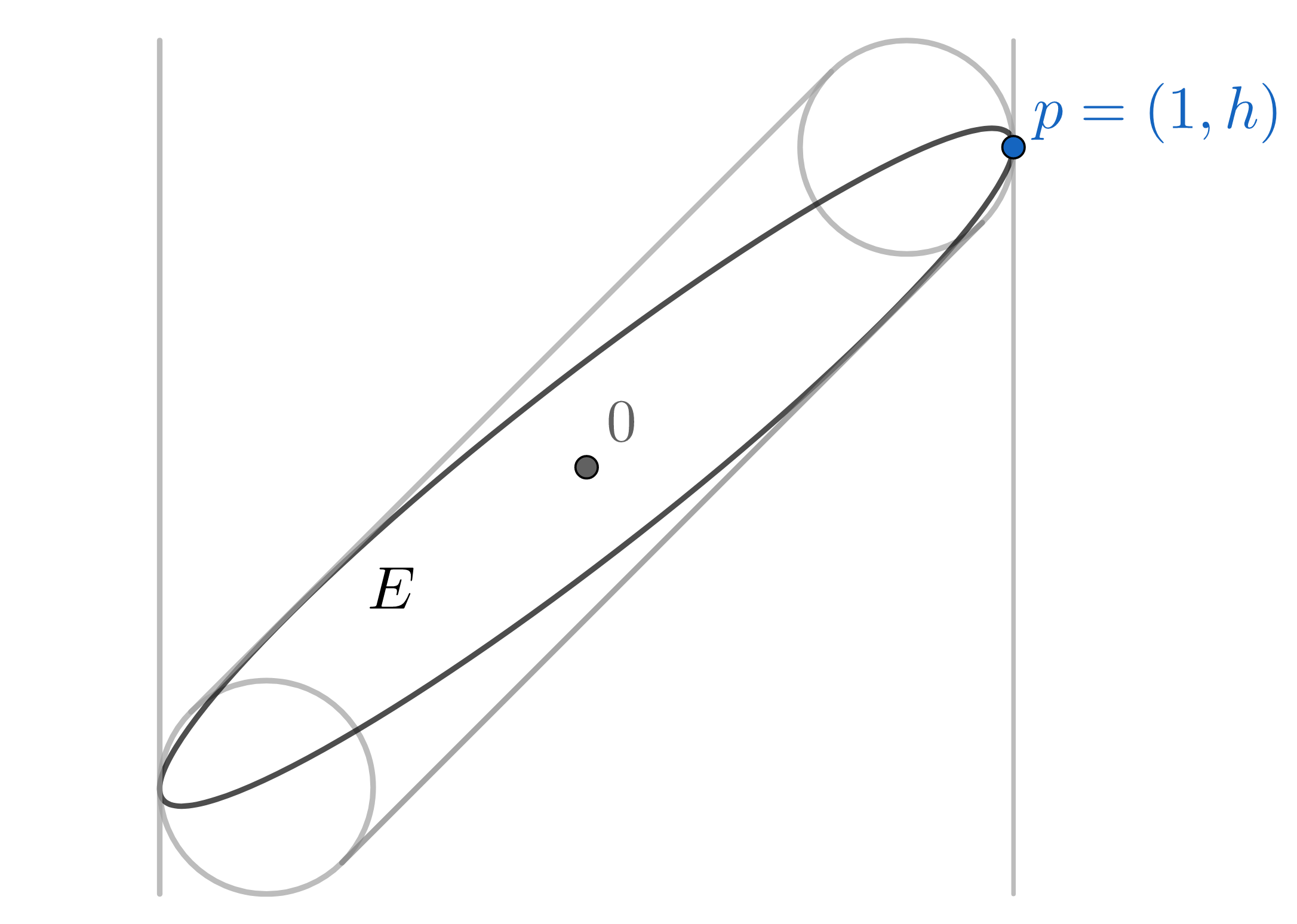}
  \end{center}
  \caption{The construction of Lemma~\ref{lem:oE}}
\end{wrapfigure}

The notion of curvature (of a plane curve) is very useful both for understanding 
the geometry of the norm and for checking the ST character of norms in the plane. 
As far as this paper is concerned, everything we need to know about curves can 
be found in the lovely book by the Bucks \cite[\S 8.4]{buck}. Let $I$ be an 
interval and $\Phi:I\subset\R\To \R^n$ be a rectifiable curve parameterized by arc length. 
If $x=\Phi(t)$, and $\Phi$ is twice differentiable at $t$, then the curvature 
of $\Phi$ at $x$ is defined as 
 $ 
 \varkappa(\Phi, x)=\|\Phi''(t)\|_2.
 $ 
We are aware that there is a small problem here if $x$ is a double (or multiple) 
point of $\Phi$ since  $ \varkappa(\Phi, x)$ depends not only on $x$, but also on $t$. 
We will not insist on this point. Note also that this notion of curvature refers 
to the usual Euclidean norm, although other approaches are possible 
(see \cite[\S 4.3]{tingley}). 

If $\Phi$ is the graph of a function $f:I\To \R$, in the sense that $\Phi(t)=(t,f(t))$ 
for $t\in I$, then
\begin{equation}\label{eq:kvsf''}
\varkappa(\Phi,x)=\frac{|f''(t)|}{\big(1+f'(t)^2\big)^{3/2}},
\end{equation}
where $x=(t,f(t))$, and we no longer assume $\Phi$ parameterized by its arc length
(see Bucks' \cite[Exercise 5 on p. 416]{buck}). As a consequence:

\begin{lemma}\label{lem:dA<dB}
Let $A, B$ be convex domains in $\R^2$ with (piecewise) smooth boundaries 
$\partial A, \partial B$. If $\partial A$ and $\partial B$ are tangent at $p$ (with 
the same inward-pointing normal) and $\kappa(\partial A, p)< \kappa(\partial B, p)$ 
then there is a neighbourhood  $U$ of $p$ such that $A\cap U\subset B\cap U$.
\end{lemma}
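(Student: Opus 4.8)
The plan is to pass to local coordinates adapted to the common tangent at $p$ and reduce the whole statement to a one-variable comparison of two graphs. Since $\partial A$ and $\partial B$ are tangent at $p$ with the \emph{same} inward-pointing normal, I would translate $p$ to the origin and rotate $\R^2$ so that the common tangent line is the $x$-axis and the shared inward normal points upward. Convexity then forces each domain to lie locally on the upper side of its boundary, so on a small interval around $0$ the arcs $\partial A$ and $\partial B$ are graphs $y=f_A(x)$ and $y=f_B(x)$ of smooth convex functions with $f_A(0)=f_B(0)=0$; tangency at $p$ is recorded by $f_A'(0)=f_B'(0)=0$. In these coordinates the asserted inclusion $A\cap U\subset B\cap U$ is nothing but a pointwise relation between $f_A$ and $f_B$ on a neighbourhood of $0$, so it suffices to compare the two functions to second order.

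Next I would extract the curvatures from the graph formula~\eqref{eq:kvsf''}. Because $f_A'(0)=f_B'(0)=0$, that formula degenerates at the origin to $\kappa(\partial A,p)=f_A''(0)$ and $\kappa(\partial B,p)=f_B''(0)$, so the hypothesis $\kappa(\partial A,p)<\kappa(\partial B,p)$ says precisely that $f_A''(0)<f_B''(0)$. A second-order Taylor expansion at $0$ (using that the values and first derivatives agree there) then shows that the difference of the two graphs is governed by the gap $f_B''(0)-f_A''(0)>0$, so the boundaries \emph{separate} near $p$ with a definite sign dictated by the strict curvature inequality. Translating this separation of boundaries back into the domains---keeping careful track of which side is singled out by the common inward normal---yields the inclusion $A\cap U\subset B\cap U$ claimed in the lemma.

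The step I expect to be most delicate is exactly this last translation, together with upgrading a one-dimensional strict inequality on a punctured interval to a genuine two-dimensional neighbourhood $U$ on which the set inclusion holds: one has to rule out the boundaries touching again arbitrarily close to $p$ and to orient the containment correctly from the inward normal, which is where the \emph{direction} of the inclusion is decided. Two minor points also need care. First, the curvature is only defined where the boundaries are twice differentiable, so I would assume $p$ is a smooth point of both $\partial A$ and $\partial B$ (for piecewise-smooth boundaries one simply works on the smooth arcs through $p$). Second, strictness of $\kappa(\partial A,p)<\kappa(\partial B,p)$ is essential: were the curvatures equal, the second-order term would vanish and the inclusion could fail, so the strict gap is what makes the comparison robust against the $o(x^2)$ error term.
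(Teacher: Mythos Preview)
Your approach is precisely the paper's: its entire proof is the one-line observation that, via formula~(\ref{eq:kvsf''}), the lemma is a restatement of the elementary fact that a real function of one variable with $f(t)=f'(t)=0$ and $f''(t)>0$ is nonnegative near $t$. You have simply made explicit the passage to graph coordinates that the paper leaves to the reader.

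A word on the step you yourself flag as delicate. If you actually carry out the bookkeeping in your coordinates (inward normal pointing upward, so locally $A=\{y\ge f_A(x)\}$ and $B=\{y\ge f_B(x)\}$), then $f_A''(0)<f_B''(0)$ forces $f_B\ge f_A$ near $0$ and hence $B\cap U\subset A\cap U$, the \emph{reverse} of the inclusion printed in the lemma. This is a slip in the statement rather than in your method: for two internally tangent discs with common inward normal the more curved one sits inside the flatter one, and the sentence immediately following the lemma in the paper (finding a disc $D\subset B$ when $S$ has finite curvature) uses exactly this corrected direction.
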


\begin{proof}
In view of (\ref{eq:kvsf''}) this is a restatement of the fact that if $f$ is 
a real-valued function of a single variable with $f(t)=f'(t)=0$ and $f''(t)>0$, 
then $f$ is non-negative in some neighbourhood of $t$.
\end{proof}

Since a circle of radius $r$ has curvature $r^{-1}$ at all points the Lemma 
implies that if $S$ has finite curvature at $x$, then $x$ belongs to a disc $D$ 
which is contained in $B$. What is more interesting than it may seem:

\begin{lemma}\label{lem:oE}
Let $B$ be the unit ball of a norm on $\mathbb R^2$ and  let $x\in\partial B$. Then 
$x$ admits an inner ellipsoid (ellipse) if and only if there is a disc $D$ such that $x\in D$ and $D\subset B$. 

Dually, $x$ admits an outer ellipsoid if and only if there is a disc $D$ such that $x\in \partial D$ and $B\subset D$. 
\end{lemma}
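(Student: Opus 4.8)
The plan is to prove the two substantial implications, namely \emph{inner disc} $\Rightarrow$ \emph{inner ellipse} and \emph{outer disc} $\Rightarrow$ \emph{outer ellipse}; the reverse implications are soft. Indeed, if $E$ is an inner ellipse at $x$, a Euclidean disc of radius equal to the minimal radius of curvature of $\partial E$ sits inside $E$ and can be placed tangent at $x$ (a standard rolling-ball argument), giving a disc $D$ with $x\in\partial D\subset E\subset B$; dually a disc of radius the maximal radius of curvature circumscribes an outer ellipse and produces an outer disc. Throughout I use that $B=-B$, so that \emph{every} origin-centred ellipse through $x$ passes through $-x$ as well, with $\pm x\in\partial B$.

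For the inner case, suppose $D\subset B$ is a disc of radius $r$ with $x\in\partial D$. Then $x$ is smooth—a Euclidean disc cannot touch a corner of $B$ from inside—so the tangent line $\ell$ of $\partial D$ at $x$ is the supporting line of $B$ at $x$. Choosing coordinates with outward normal $e_1$, write $x=(h,x_2)$ with $h=\langle e_1,x\rangle>0$ and $\ell=\{X=h\}$, and consider the pencil of origin-centred ellipses $E_t=\{y:y^\top N_t^{-1}y\le 1\}$ with $N_t=\left(\begin{smallmatrix} h^2 & hx_2\\ hx_2 & t\end{smallmatrix}\right)$, $t>x_2^2$. Each $E_t$ is tangent to $\ell$ at $x$ (and to $-\ell$ at $-x$); as $t\downarrow x_2^2$ one has $N_t\to xx^\top$, so $E_t$ shrinks in the Hausdorff metric to the chord $[-x,x]$ while $\kappa(\partial E_t,x)$ increases to $+\infty$. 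Fix $t$ with $\kappa(\partial E_t,x)>1/r=\kappa(\partial D,x)$. By the curvature comparison of Lemma~\ref{lem:dA<dB}, the more curved $E_t$ lies inside $D$—hence inside $B$—in a neighbourhood of $x$, and symmetrically near $-x$. Away from $\pm x$ the relative interior of $[-x,x]$ lies in $\operatorname{int}B$ (a line through the interior point $0$ meets $\partial B$ only at $\pm x$), so for $t$ close to $x_2^2$ the convergence $E_t\to[-x,x]$ forces $E_t$ inside $B$ there too. Thus $E_t$ is the desired inner ellipse.

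The outer case runs along the same pencil toward the opposite end. Now $D\supset B$ is a disc of radius $R$ with $x\in\partial D$; its tangent line $\ell$ at $x$ supports $B$, and $B$ meets $\ell$ only at $x$ because the round disc $D\supset B$ does. As $t\to\infty$, $E_t$ expands in the Hausdorff metric to the slab bounded by $\pm\ell$, which contains $B$, while $\kappa(\partial E_t,x)$ decreases to $0$. Fixing $t$ with $\kappa(\partial E_t,x)<1/R=\kappa(\partial D,x)$, Lemma~\ref{lem:dA<dB} now places $D$, hence $B$, inside the flatter $E_t$ near $\pm x$, and convergence to the slab handles the rest; so $E_t\supset B$ with $x\in\partial E_t$ is an outer ellipse. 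Comparing with the round disc $D$ rather than with $\partial B$ is exactly what lets this step tolerate a non-smooth $x$.

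The one point needing genuine care—and the main obstacle—is gluing the two kinds of control into a \emph{global} inclusion: the curvature comparison only yields containment in some neighbourhood of $\pm x$, whereas the Hausdorff limit only controls $E_t$ on sets bounded away from $\pm x$. The real work will be to make the neighbourhoods furnished by Lemma~\ref{lem:dA<dB} uniform as $t$ approaches the relevant endpoint of the pencil, using that letting the curvature of $E_t$ increase (inner case) or decrease (outer case) only improves the local inclusion, so that a single value of $t$ succeeds on all of $\partial B$ simultaneously.
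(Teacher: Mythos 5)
Your argument follows essentially the same route as the paper's own (explicitly labelled) sketch: a one\nobreakdash-parameter pencil of origin\nobreakdash-centred ellipses tangent to the supporting line at $\pm x$, collapsing at one end of the parameter range onto the chord $[-x,x]$ with curvature at $x$ blowing up, and expanding at the other end to the slab between the two supporting lines with curvature tending to $0$, combined with the curvature comparison of Lemma~\ref{lem:dA<dB} at $\pm x$; your pencil $N_t$ is a reparametrization of the paper's family $Ax^2+By^2+Cxy=1$. You additionally write out the outer case, which the paper dispatches by duality.

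The one step you defer --- making the neighbourhood supplied by Lemma~\ref{lem:dA<dB} uniform in $t$ so that the local and global controls can be glued --- is a genuine gap as written, but it closes in one line from a property of your pencil that you never state: it is \emph{nested}. Indeed $t'<t$ gives $N_{t'}\preceq N_t$, hence $N_t^{-1}\preceq N_{t'}^{-1}$ and $E_{t'}\subseteq E_t$. So in the inner case, first fix $t_1$ with $\varkappa(\partial E_{t_1},x)>1/r$ and let $U$ be the neighbourhood of $x$ furnished by Lemma~\ref{lem:dA<dB} for the pair $(E_{t_1},D)$; then $E_t\cap U\subseteq E_{t_1}\cap U\subseteq D$ for \emph{every} $t<t_1$ with this fixed $U$, and you may afterwards shrink $t$ to push $E_t\setminus\bigl(U\cup(-U)\bigr)$ into $B$ via the collapse onto the chord. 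The outer case is symmetric, using $E_t\supseteq E_{t_1}$ for $t>t_1$. Alternatively, the inner case needs no curvature at all: writing $y=ux-ve_2\in E_t$, so that $u^2+v^2/(t-x_2^2)\leq 1$, and $c_0=x-re_1$ for the centre of $D$, one has $uc_0\in[-c_0,c_0]$ and
$$
\operatorname{dist}_2\bigl(y,[-c_0,c_0]\bigr)\leq\|y-uc_0\|_2=\|ure_1-ve_2\|_2=\sqrt{u^2r^2+v^2}\leq r
$$
as soon as $t-x_2^2\leq r^2$; hence $E_t\subseteq\operatorname{conv}\bigl(D\cup(-D)\bigr)\subseteq B$ outright, which is in effect what the paper's closing remark that taking $|C|$ large enough ``does the trick'' amounts to.
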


\begin{proof}[Sketch of the proof]
We only prove the first part, being the ``only if'' part clear: if $E$ is an ellipse of semi-axes $a\leq b$
and  $x\in \partial E$, then $E$ contains the disc of radius $a^2/b$ centered at the inner normal and passing through $x$.

To prove the converse it suffices to check the following: {\em Let $h\geq 0$ and $0<r<1$. 
Let $D=D((1-r, h); r)$ and $p=(1,h)$. Then there is an ellipsoid that passes through $p$ and is contained in the convex hull of $-D\cap D$}.

Indeed, the equation of any ellipse with center the origin is
\begin{equation*}
f(x,y) \stackrel{{\rm def}}{=\hspace{-2pt}=} Ax^2+By^2+Cxy= 1.
\end{equation*}
If besides the tangent line at $p=(1,h)$ is ``vertical'' we have
$$
\begin{cases}A+Bh^2+Ch=1\\
2Bh+C=0
\end{cases}\quad\implies \quad \begin{cases}B= -C/(2h)\\
A=1-\frac{1}{2}Ch
\end{cases}
$$
since $\nabla f(x,y)= (2Ax+Cy, 2By+Cx)$.

On the other hand (see Buck's \cite[Exercise 6 on p. 416]{buck}), the curvature of a plane curve satisfying the equation $f(x,y)=1$ is given by the formula
$$
\varkappa = \frac{\left| f_{xx}f_y^2 -2f_{xy}f_xf_y +  f_{x}^2f_{yy} \right|}{\left( f_{x}^2+ f_{y}^2	\right)^{3/2}} .
$$
Let us compute the curvature at $p$. Clearly, we have $f_y(p)=0, f_{yy}=-C/(2h), f_x(p)= 2$, so 
$$
\varkappa(p) = \frac{\left|f_{x}^2f_{yy}\right|}{\left|f_{x}^3\right|}= \frac{|C|}{4h}.
$$
Hence taking $|C|$ large enough does the trick.
\end{proof}

\begin{wrapfigure}{r}{0.48\textwidth}
  \begin{center}
    \includegraphics[width=0.48\textwidth]{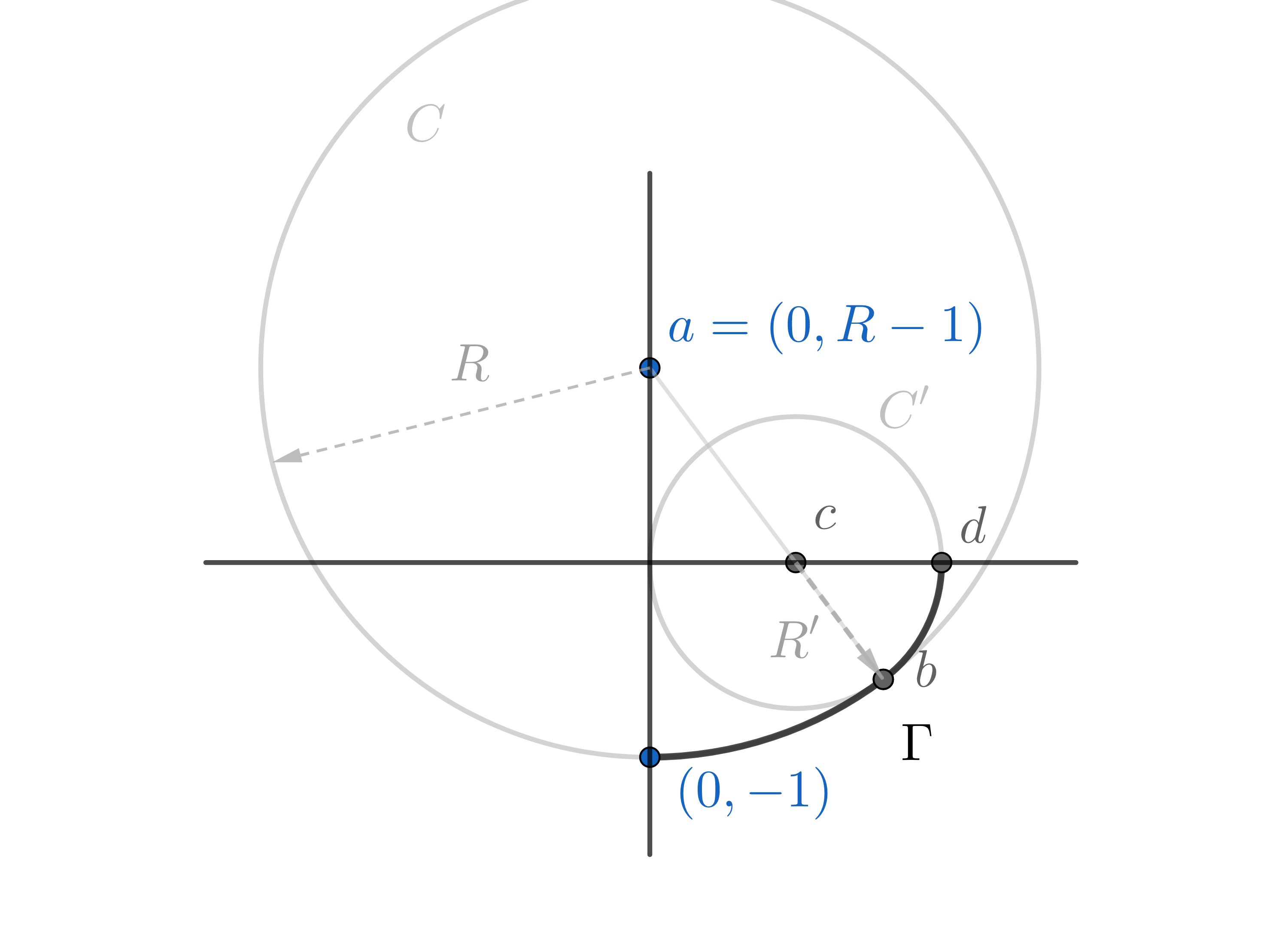}
  \end{center}
  \caption{The construction of Example~\ref{ex:splicing}, which is also the basis of Example~\ref{ex:noBT}}
\end{wrapfigure}

\begin{example}\label{ex:splicing}
Semitransitive spaces splicing arcs of circles.
\end{example}
Here is a simple example of ST norm on $\R^2$. Fix any $R>1$ and let $C$ be the 
circle of center $a=(0,R-1)$ and radius $R$. Let $b=(b_1,b_2)$ be any point of 
$C$ such that $b_1>0$ and $b_2<0$. Let $c$ be the intersection of the horizontal 
axis and the line joining $a$ and $b$. Let $C'$ be the circle of center $c$ and 
radius $R'=\|c-b\|_2$. Note that $C'$ passes through $b$ and that $R'<R$. Let $d$ 
be the rightmost point in the insersection of $C'$ and the horizontal axis. 
Let $\Gamma$ be the curve (in the fourth quadrant) that agrees with $C$ from 
$(0,-1)$ to $b$ and with $C'$ from $b$ to $d$.

It is clear that there is exactly one norm $\|\cdot\|$ on the plane such that 
$\|(x,y)\|=1 \iff (|x|,-|y|)\in\Gamma$ and also that $\|\cdot\|$ is ST since 
every point in the unit sphere satisfies the criteria provided by Lemma~\ref{lem:oE}.

\medskip

Let $X$ be a FD space and assume that the orbit of $x$ is $S$. Put
$$
O(x,K)=\big\{y\in S: y=Tx \text{ for some } T\in \Aut_1(X) \text{ such that }\|T^{-1}\|\leq K\big\}
$$
Then $S=\bigcup_{n=1}^\infty O(x,n)$. An obvious ``category'' argument and the fact that the sets $\{T\in \Aut_1(X): \|T^{-1}\|\leq K\}$ are compact in $L(X)$ show that for every open set $U\subset S$ there is $K$ such that $U\cap O(x,K)$ has nonempty interior in $S$.

This fosters the idea that ST spaces could be {\em boundedly semitransitive} (BST) in the following sense: there is a constant $K$ such that, for every $x,y\in S$ there is $T\in\Aut_1(X)$ such that $y=Tx$ and $\|T^{-1}\|\leq K$.

Unfortunately this is not the case in general:

\begin{example}\label{ex:noBT}
A (2-dimensional) semitransitive space which is not boundedly semitransitive.
\end{example}

Recall that given a locally integrable function $k:I\To [0,\infty)$, where $I$ is an interval containing the origin, the formul\ae
\begin{align*}
x(s)&=x_0+\int_0^s \cos\left(\int_0^t k(u)\,du\right) dt,\\
y(s)&=y_0+\int_0^s \sin\left(\int_0^t k(u)\,du\right) dt
\end{align*}
define a $C^1$ plane curve $\Gamma(s)=(x(s), y(s))$ such that
\begin{itemize}
\item $\|\Gamma'(s)\|_2=1$ for all $s\in I$.
\item If $s$ is a Lebesgue point of $k$, then the curvature of $\Gamma$ at $s$ is $k(s)$.
See \cite[Definition 7.1.19]{kannan}
\end{itemize}

\begin{wrapfigure}{r}{0.45\textwidth}\label{fig:k}
  \begin{center}
  \vspace{-0pt}
    \includegraphics[width=0.45\textwidth]{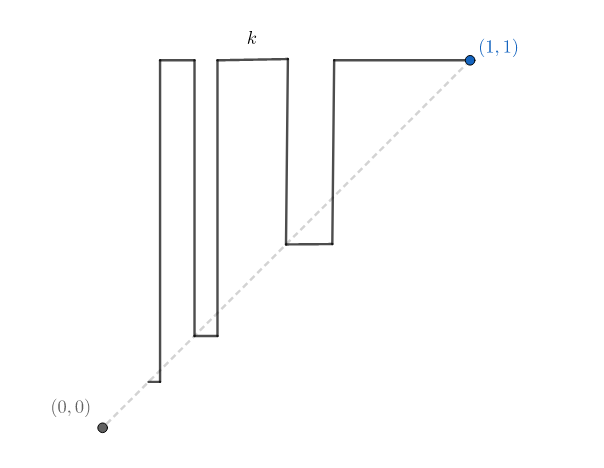}
  \end{center}
  \caption{A portion of the ``graph'' of the curvature function $k$ of Example~\ref{ex:noBT}}
\end{wrapfigure}

Let us first define $k:[-\frac{\pi}{2}, 1]\To(0,1]$ as follows (see Figure~5):
\begin{itemize}
\item For each $n\in\mathbb N$ the value of $k$ on $[2^{-n},2^{-n}+ 2^{-n-2}]$ is $2^{-n}$.
\item Otherwise $k(s)=1$.
\end{itemize}
Then we consider the curve defined as above, with $x_0=-1, y_0=0$, and call it $\Gamma$.
Now look at the endpoint $p=(x(1), y(1))$ of $\Gamma$ corresponding to $s= 1$: a moment's reflection suffices to realize that:

\begin{itemize}
\item The initial point of $\Gamma$ is $(-1,0)$.
\item $0<x(1)< 1, -1<y(1)<0$ since $(x(0), y(0))= (0,-1)$, both $x'(s)$ and 
$y'(s)$ are strictly positive for $0<s<1$ and $s$ parametrizes $\Gamma$ naturally.
\item The (semi) tangent line to $\Gamma$ at $p$  has strictly positive slope and, 
moreover, if $(a,0)$ is the only point of that line that lies on the ``horizontal'' axis,  then $a>1$.
\end{itemize}

It follows (see Figure~4) that there exist a pair of circles $C, C'$ such that $C$ is tangent 
to $L$ at $p$, $C'$ is tangent to the line $x=1$ at $(1,0)$ and $C$ and $C'$ 
are mutually tangent at some other intermediate point, say $q$.

Let $S^-$ be the curve that ``agrees'' with $\Gamma$ between $(-1,0)$ and $p$, 
with $C$ between $p$ and $q$ and with $C'$ between $q$ and $(1,0)$.  It is clear 
that there is exactly a norm, say $\|\cdot\|$, on $\R^2$ 
whose unit sphere $S$ contains $S^-$.

We want to see that the resulting space is ST. Observe that $S^-$ is a (``smooth'') 
countable union of arcs of circles: going from right to left we find first the arcs 
$C'$ and $C$, then an arc of circle of radius 1 and lenght 
$\frac{1}{2}- \frac{1}{2^3}= \frac{3}{2^3}$, then an arc of circle of radius $2$ 
and lenght $\frac{1}{8}$, then another  arc of circle of radius 1 and lenght 
$\frac{3}{2^4}$, then an arc of circle of radius $4$ and lenght $\frac{1}{16}$ 
and so on. These arcs accumulate at the point $(0,-1)$ in the obvious sense. 
Finally we have an arc of radius 1, centered at the origin, between $(-1,0)$ and $(0,-1)$.

Now it should be clear that all points of $S^-$ admit inner discs and also that 
all of them, except $(0,-1)$, have outer discs. But actually $(0,-1)$ admits an 
outer disc whose  radius is not very large. Indeed for every $n$ the (Lebesgue) 
measure of the set $\{t\in[0,2^{-n}]: k(t)=1\}$ is at least $\frac{3}{4}2^{-n}$. 
Therefore, if we put $K(s)=\int_0^s k(t)dt$, then $\frac{3}{5}s\leq K(s)\leq s$ 
for $0\leq s\leq 1$. Since
$$
x(s)=\int_0^s \cos K(t)\, dt\quad\text{and}\quad
y(s)=-1+\int_0^s \sin K(t)\, dt
$$
it is clear that $\Gamma(s)$ cannot leave the disc of radius $\frac{5}{3}$ 
centered at $(0,-1)$ for any $0\leq s\leq 1$, which is enough.

The resulting normed plane, whose sphere is $S$, cannot be BST because the 
inverse of a contractive automorphism sending a point with small curvature 
to a point of large curvature must have large norm, see why? If you still 
do not see it clearly, take a look at the upcoming Proposition~\ref{prop:BST}. 


\section{Bounded semitransitivity}

We now study BST spaces. It turns out that these admit a quite elegant geometric 
characterization. Before going any further, recall that the modulus of uniform 
convexity of a Banach space $X$ is the function $\delta_X:(0,2]\To[0,1]$ defined by
\begin{equation}\label{eq:defdX}
\delta_X(\eps)=\inf\left\{1-\left\|\frac{x+y}{2}\right\|: \|x-y\|\geq \eps, \|x\|,\|y\|\leq 1\right\}.
\end{equation}
Note that $1-\frac{1}{2}\left\|x+y\right\|$ represents the distance between the 
midpoint of $x$ and $y$ and its closest multiple in the unit sphere (the larger 
the MUC the more convex the ball is) and that one can replace all inequalities 
by equalities in (\ref{eq:defdX}) without altering the value of  $\delta_X(\eps)$. 
We say that $\delta_X$ is of power type 2 if $\delta_X(\eps)\geq c\eps^2$ for 
some $c>0$ and all $0<\eps\leq 2$.

Hilbert spaces have ``optimal'' MUC given by
$$\delta_2(\eps)=1-\sqrt{1-\left(\frac{\eps}{2}\right)^2}\geq \frac{ \eps^2}{8}.$$
Condition (d) below is stated also in terms of the modulus of uniform smoothness 
(MUS, a measure of the  ``flatness'' of the unit ball which we do not even define here). 
It will suffice to recall here that $X$ has MUS of power type 2 if and only if $X^*$ 
has MUC of power type 2, so that (d) is equivalent to: \emph{Both $X$ and $X^*$ have 
MUC of power type $2$}. We refer the reader to \cite[\S~1.e]{LT2} for the basics on uniform convexity and smoothness.

\begin{prop}\label{prop:BST}
For a finite dimensional space $X$ the following are equivalent:
\begin{itemize}
\item[{\rm (a)}] $X$ is boundedly semitransitive. 
\item[{\rm (b)}] $X^*$ is boundedly semitransitive. 
\item[{\rm (c)}] There is a constant $\lambda$ such that every $x\in S$ admits an 
inner ellipsoid $E$ and an outer ellipsoid $F$ such that $F\subset \lambda E$.
\item[{\rm (d)}] $X$ has moduli of uniform convexity and smoothness of power type $2$.
\end{itemize}
\end{prop}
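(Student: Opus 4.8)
The plan is to treat condition~(c) as the hub: I would establish (c)$\Rightarrow$(a) and (a)$\Rightarrow$(c) directly, observe that (c) is self-dual so that (b) folds in for free, and finally translate (c) into the language of moduli to obtain (c)$\Leftrightarrow$(d). Two facts are used throughout. First, by Theorem~\ref{th:ST} condition (a) already forces every point of $S$ to admit both an inner and an outer ellipsoid, so existence is never the issue, only uniformity of the comparison constant. Second, in a finite-dimensional space any two origin-centred ellipsoids $E\subset F$ are comparable, i.e. $F\subset\mu E$ for some finite $\mu$.

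For (c)$\Rightarrow$(a) I repeat the construction from the proof of Theorem~\ref{th:ST}, this time bookkeeping norms. Given $x,y\in S$, take an outer ellipsoid $F=F_x$ at $x$ and an inner ellipsoid $E=E_y$ at $y$, pick $L$ with $L(F)=E$ and a Euclidean isometry $R$ of $E$ with $R(Lx)=y$, and set $T=RL$. Writing $\|\cdot\|_E,\|\cdot\|_F$ for the Euclidean norms of $E,F$, one has $\|Tz\|_E=\|z\|_F$, whence $\|Tz\|\le\|Tz\|_E=\|z\|_F\le\|z\|$, so $T$ is contractive. For the inverse, if $w=Tz$ then $\|z\|_F=\|w\|_E$, and using the comparability at $x$ in the form $\|z\|\le\|z\|_{E_x}\le\lambda\|z\|_{F_x}=\lambda\|w\|_E$ together with the comparability at $y$ in the form $\|w\|_E=\|w\|_{E_y}\le\lambda\|w\|_{F_y}\le\lambda\|w\|$, one gets $\|z\|\le\lambda^2\|w\|$, i.e. $\|T^{-1}\|\le\lambda^2$. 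Thus $X$ is BST with constant $\lambda^2$. That (c) is self-dual is the dual-ellipsoid computation already announced in the paper: if $E$ is inner and $F$ outer at $x$ with $F\subset\lambda E$, then $F^*$ is inner and $E^*$ outer at any norming $x^*$, and $(\lambda E)^*=\tfrac1\lambda E^*\subset F^*$ reads $E^*\subset\lambda F^*$, which is exactly (c) for $X^*$. Together with the forthcoming (a)$\Rightarrow$(c), applied to $X$ and to $X^*$, this closes the equivalences (a)$\Leftrightarrow$(b)$\Leftrightarrow$(c).

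The substantive new point is (a)$\Rightarrow$(c), which I would prove by transporting a single pair of ellipsoids around the sphere. Fix once and for all a reference point $x_0\in S$ with an inner ellipsoid $E_0$ and an outer ellipsoid $F_0$, and let $\lambda_0$ be a constant with $F_0\subset\lambda_0 E_0$. Given $y\in S$, bounded semitransitivity supplies contractions $T_y,S_y\in\Aut_1(X)$ with $T_yx_0=y$, $S_yy=x_0$ and $\|T_y^{-1}\|,\|S_y^{-1}\|\le K$. Then $E_y:=T_yE_0$ is inner at $y$, since $T_yE_0\subset T_yB\subset B$ and $y=T_yx_0\in T_yE_0$, and $F_y:=S_y^{-1}F_0$ is outer at $y$, since $B\subset S_y^{-1}B\subset S_y^{-1}F_0$ and $y=S_y^{-1}x_0\in\partial F_y$. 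Now $\|z\|_{E_y}=\|T_y^{-1}z\|_{E_0}$ and $\|z\|_{F_y}=\|S_yz\|_{F_0}$; estimating each with the comparabilities at $x_0$ and the bounds $\|T_y^{-1}\|,\|S_y^{-1}\|\le K$ gives $\|z\|_{E_y}\le\lambda_0 K\|z\|$ and $\|z\|\le\lambda_0 K\|z\|_{F_y}$, hence $F_y\subset\lambda E_y$ with $\lambda=\lambda_0^2K^2$, uniformly in $y$. This is (c). Note that $\lambda_0$ is a fixed constant of the reference point, absorbed into the final bound.

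Finally, for (c)$\Leftrightarrow$(d) I would first recast (c) as the existence of a single $R$ such that every $x\in S$ has an inner ellipsoid with $E_x\supset\tfrac1R B$ and an outer ellipsoid with $F_x\subset RB$; the comparability $F\subset\lambda E$ together with the sandwich $E\subset B\subset F$ makes the two formulations interchangeable. Each half matches a modulus. Uniform outer ellipsoids yield MUC of power type $2$ by a clean midpoint argument: for $u,v\in S$ set $m=\tfrac12(u+v)$ and $w=m/\|m\|$, and apply the Euclidean modulus of $F_w$ to $u,v\in B\subset F_w$; since $w\in\partial F_w$ one has $\|m\|_{F_w}=\|m\|$, while $\|u-v\|_{F_w}\ge\tfrac1R\|u-v\|$, so $\|m\|\le 1-\tfrac1{8R^2}\|u-v\|^2$. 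Conversely, MUC of power type $2$ says that $\tfrac12\|\cdot\|^2$ is uniformly strongly convex, so at each $x$ the ball sits inside a supporting paraboloid of uniform aperture, in which one inscribes an outer ellipsoid of uniformly bounded size. By the inner/outer duality under $*$ and the recalled identity ``MUS of $X$ $=$ MUC of $X^*$'', the inner-ellipsoid side corresponds to MUS of power type $2$, and (c) is precisely the conjunction of the two. I expect this last equivalence to be the main obstacle: the midpoint computation is painless, but manufacturing a genuine ellipsoid---not merely a quadratic bound---of size controlled uniformly in $x$ out of the strong convexity of $\tfrac12\|\cdot\|^2$, and keeping every constant independent of $x$, is where the real care is required.
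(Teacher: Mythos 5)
Your architecture is essentially the paper's. The transport argument for (a)$\implies$(c) --- push a fixed inner ellipsoid forward by $T_y$ and pull a fixed outer ellipsoid back by $S_y^{-1}$, ending with the constant $\lambda_0^2K^2$ --- is the proof given there almost verbatim, and proving (c)$\implies$(a) by bookkeeping the norms in the proof of Theorem~\ref{th:ST} is exactly what the paper does. Two of your choices are genuine, and correct, variants. You derive (a)$\iff$(b) from the self-duality of (c), whereas the paper gets it directly by passing to adjoints (which preserve automorphisms, inverses and operator norms); since the paper also needs and states the self-duality of (c) later on, your route merely reorders the same ingredients. More substantially, your proof that uniformly bounded outer ellipsoids give MUC of power type 2 --- the parallelogram law in $F_w$ applied to $u,v\in B\subset F_w$ with $w=m/\|m\|$ --- is a cleaner and more standard computation than the paper's, which goes through the ``modulus of strong extremality'' $\Delta_X(x,\eps)$ and the inequality $\delta_X(2\eps)\geq \inf_{\|x\|=1}\Delta_X(x,\eps)$. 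Your version is complete and correct (the only points to note are that $\|m\|_{F_w}=\|m\|$ uses $w\in\partial F_w$, and that the case $m=0$ is vacuous).

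The one genuine gap is the one you flagged yourself: (d)$\implies$(c), that is, manufacturing uniformly bounded outer ellipsoids from MUC of power type 2. ``The ball sits inside a supporting paraboloid of uniform aperture, in which one inscribes an ellipsoid'' is the right picture but not yet a proof. The paper closes it with an explicit construction you may as well adopt. First, a lemma: if $x^*$ supports $B$ at $x$, $H=\ker x^*$, and $z=tx+u\in B$ with $u\in H$, then $1-t\geq \delta_X(\|u\|)$ (apply the definition of the MUC to $z$ and to $tx+u^-$, where $u^-$ is the negative multiple of $u$ with $\|tx+u^-\|=1$, noting that their midpoint lies on the line $\{tx+su\}$ and that $\|tx+su\|\geq |t|$ for all $s$); consequently $t^2+\delta_X(\|u\|)\leq 1$. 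Second, fix an auxiliary Euclidean norm $|\cdot|$ with $|\cdot|\leq\|\cdot\|\leq C|\cdot|$ and, writing $\delta_X(\eps)\geq (c\eps)^2$, set $|z|^x_c=\sqrt{t^2+c^2|u|^2}$. The lemma gives $|z|^x_c\leq 1$ for every $z\in B$, while $|x|^x_c=1$, so $E^x_c=\{z:|z|^x_c\leq 1\}$ is a genuine outer ellipsoid at $x$; and the uniform bound $E^x_c\subset\alpha B$ comes for free because the maps $z\mapsto tx$ and $z\mapsto u$ have norms at most $1$ and $2$, so the norms $|\cdot|^x_c$ are uniformly equivalent to $\|\cdot\|$ over $x\in S$. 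The MUS half then follows from your duality observation, exactly as in the paper.
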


The remainder of the section is devoted to proving this result. Let's get the 
boring parts out of the way now. The equivalence (a)$\iff$(b) can be proved as 
in Theorem~\ref{th:UMST} since taking Banach space adjoints preserves 
automorphisms, inverses, and the operator norm.

(a)$\implies$(c). Assume $X$ is BST with constant $\beta$. Fix $y\in S$ and let 
$E$ and $F$ be inner and outer ellipsoids at $y$, respectively, which exists by 
ST and Theorem~\ref{th:UMST}. Clearly  $F\subset \mu E$ for sufficiently large 
$\mu$. Pick $x\in S$ and then $T,L\in\Aut_1(X)$ such that $x=Ty, y=Lx$, with 
$\|T^{-1}\|, \|L^{-1}\|\leq \beta$. Then $E'=TE$ is inner at $x$, $F'=L^{-1}F$ 
is outer at $x$, and $F'\subset \beta^2\mu E'$.

The implication (c)$\implies$(a) can be proved as the implication (b)$\implies$(a) 
of Theorem~\ref{th:UMST}, just following the track of the norms.

We now adress  (c)$\implies$(d). The proof uses the so-called {\em modulus of 
strong extremality}, defined for $x\in S$ and $\varepsilon\in(0,1]$ as:
$$
\Delta_X(x,\eps)=\inf\{1-\rho: \text{there is $y\in X$ such that }\|y\|\geq \eps \text{ and } \|\rho x\pm y\|\leq 1\}.
$$
It is not hard to see that 
$
\delta_X(2\eps)=\inf_{\|x\|=1} \Delta_X(x,\eps)
$ ---only the easiest part ``$\geq$'' will be used here.

Assume $x\in S$ admits an outer ellipsoid $F$ with $F\subset\alpha B$. This 
implies that  if $|\cdot|$ denotes the Euclidean norm corresponding to $F$, 
then $|\cdot|\leq \|\cdot\|\leq \alpha|\cdot|$. An easy computation based 
on the fact that $S$ and $\partial F$ are tangent at $x$ shows that 
$$
\Delta_X(x,\eps)\geq \Delta_2(x,\eps/\alpha)= 1-\sqrt{1-\left(\frac{\eps}{\alpha}\right)^2}\geq \frac{\eps^2}{2\alpha^2},
$$
where, as the reader may guess, $\Delta_2$ stands for the modulus of 
strong extremality of the 
Euclidean norms. Thus, (c) implies that $\delta_X(\eps)\geq c\eps^2$ 
where $c$ depends only on $\lambda$. The fact that $X$ has MUS of power type 2 
follows from the fact that a finite dimensional space satisfies (c) if and only 
if its dual does since a Banach space has MUS of power type 2 if and only if 
its dual has MUC of power type 2.

It only remains to check that (d)$\implies$(c), which amounts to see that if 
$X$ has MUC of power type 2, then there is a constant $\alpha>0$ such that 
every $x\in S$ admits an outer ellipsoid $F$ such that $F\subset \alpha B$.

The proof is based on the following, surely well-known, observation:


\begin{lemma}
Let $X$ be a normed space with modulus of uniform convexity $\delta$. Let $x\in S$ 
and assume that $H$ is a supporting hyperplane at $x$. Assume $z\in B$ and write 
$z=tx+u$, with $t\in\R$ and $u\in H$. Then $1-t\geq \delta(\|u\|)$ and so $t^2+ \delta(\|u\|)\leq 1$.
\end{lemma}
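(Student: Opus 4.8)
The plan is to realise the modulus of uniform convexity through a judicious choice of two points of $B$. Let $x^*$ be a norm-one support functional at $x$, so that $\langle x^*,x\rangle=1$ and $H=\ker x^*$; then the decomposition $z=tx+u$ with $u\in H$ forces $t=\langle x^*,z\rangle$, and since $\|x^*\|=1$ and $\|z\|\leq 1$ we get $|t|\leq 1$. In particular $tx\in B$, the point I will use as an auxiliary partner for $z$ (and one checks $\|u\|\leq\|z\|+|t|\leq 2$, so $\delta(\|u\|)$ is defined, the case $u=0$ being trivial under the convention $\delta(0)=0$).

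First I would prove $1-t\geq\delta(\|u\|)$. The temptation is to pair $z=tx+u$ with its mirror image $tx-u$, whose midpoint is $tx$ of norm $|t|$ and whose separation is $2\|u\|$; this is exactly what happens in Hilbert space. The obstacle---and I expect it to be the only real one---is that $tx-u$ need not lie in $B$, because the ball is \emph{not} symmetric about the line $\mathbb{R}x$. I sidestep this by pairing $z$ with $b=tx$ instead: both points lie in $B$, their difference is $z-b=u$ so $\|z-b\|=\|u\|$, and the midpoint $\tfrac12(z+tx)=tx+\tfrac12 u$ has $x^*$-value equal to $t$ and hence norm at least $t$. Feeding this pair into the definition of $\delta$ gives $t\leq\bigl\|\tfrac12(z+tx)\bigr\|\leq 1-\delta(\|u\|)$, which is the first claim.

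For the second inequality I would rewrite it as $\delta(\|u\|)\leq 1-t^2=(1-t)(1+t)$. When $t\geq 0$ this is immediate from the first part, since $1-t^2\geq 1-t\geq\delta(\|u\|)$. For general $t$ I apply the first part verbatim to $-x$, whose support functional is $-x^*$ (still with kernel $H$) and for which the coefficient of $z$ along $-x$ is $-t$; this yields $1+t\geq\delta(\|u\|)$. Since one of the two factors $1\mp t$ is always at least $1$ while both are at least $\delta(\|u\|)$, the product $(1-t)(1+t)$ is at least $\delta(\|u\|)$, which completes the proof.
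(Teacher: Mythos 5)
Your proof is correct and follows essentially the same route as the paper's: both feed into the definition of $\delta$ a pair of points of $B$ lying on the line $\{tx+su: s\in\R\}$, whose midpoint has norm at least $|t|$ because the support functional takes the value $t$ there. Your choice of the partner point $tx$ (rather than the paper's $tx+u^-$, the negative-side intersection of that line with the unit sphere) is a mild simplification that avoids normalizing $z$ to the sphere, and your $\pm x$ symmetry argument for $t^2+\delta(\|u\|)\leq 1$ makes explicit a step the paper leaves implicit.
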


\begin{proof}
One can assume that $X$ has dimension 2 and also that $\|z\|= 1$. Since 
$\|tx+u\|\geq \|tx\|=|t|$ we have $|t|\leq 1$ and so $\|u\|\leq 2$. In particular 
$\delta(\|u\|)$ is correctly defined and since $\delta(\eps)\leq 1$ for all 
$\eps\leq 2$ we may assume $0<t<1$.

We encourage the reader to draw their own monikers. Let $u^-$ be the only 
negative multiple of $u$ such that $\|tx+u^-\|=1$. We claim that even 
$1-t\geq \delta(\|u\|+\|u^-\|)$, which is clear just applying the definition 
of MUC to $\eps= \|u\|+\|u^-\|$ which equals the distance between $x'=tx+u$ 
and $y'=tx+u^-$, taking into account that the midpoint of $x'$ and $y'$ lies 
in the line $\{tx+su: s\in\R\}$ and that  $\|tx+su\|\geq |t|$ for all $s$. 
In particular $1-\frac{1}{2}\|x'+y'\|\leq 1-t$. Nice, isn't it?
\end{proof}

To complete the proof,
let $|\cdot|$ be an Euclidean norm on $X$ such that $|\cdot|\leq \|\cdot\|\leq C|\cdot|$ 
for some constant $C$, for instance one could take the norm associated to the 
ellipsoid of minimal volume containing $B$. We define a two-parameter family of 
Euclidean norms as follows: Given $x\in S$, let $x^*$ be the (or a fixed) support 
functional of $B$ at $x$ and set $H_x=\ker x^*$, so that each $z\in X$ can be 
written as $z=tx+u$, with $t\in\mathbb R$ and $u\in H_x$ --- of course 
$t=\langle x^*, z\rangle$ and $u=z-tx$. Then, for each $b>0$ we set
\begin{equation}
	|z|_b^x= \sqrt{t^2+ b^2|u|^2}\qquad\text{and}\qquad E_b^x=\{z\in X: |z|_b^x\leq 1\}.
\end{equation}
Note that  $a<b\implies E_b^x\subset E_a^x$ for fixed $x\in S$ and that for 
each fixed $b>0$, the norms $|\cdot|_b^x$ are all uniformly equivalent and 
uniformly equivalent to $\|\cdot\|$. Hence it suffices to see that there 
exists $b>0$ such that $B\subset E_b^x$ for all $x\in S$. But if we assume 
$\delta_X(\eps)\geq (c\eps)^2$ for some $c>0$ it is clear from the Lemma 
that if $x\in S$ and  $u\in H_x$ are such that $z=tx+u$ belongs to $B$, then
$$
t^2+ c^2|u|^2\leq  t^2+ c^2\|u\|^2\leq t^2+ \delta(\|u\|)\leq  1.
$$
It follows that $|\cdot|^x_c\leq \|\cdot\|$ for all $x$ and since $|x|^x_c=1$ 
we have that the ellipsoid $E^x_c$ is outer at $x$, which is enough.

\begin{remark}
The proof actually shows that a Banach space isomorphic to a Hilbert space has 
MUC of power type 2 if and only if the points of its unit sphere admit uniformly 
bounded outer ellipsoids and that it has MUS of power type 2 if and only if the 
points of the unit sphere admit inner ellipsoids whose intersection contains a 
neighbourhood of the origin.
\end{remark}

\section{Uniform micro-semitransitivity}\label{sec:UMST}

\begin{definition}
A Banach space $X$ (or its norm) is said to be uniformly micro-semi\-tran\-sitive 
(UMST) if for every $\eps>0$ there exists $\delta>0$ such that whenever $x,y\in S$ 
satisfy $\|x-y\|<\delta$ there is $T\in \Aut_1(X)$ such that $y=Tx$ with $\|T-{\bf I}_X\|<\eps$.
\end{definition}

It is clear that UMST $\implies$ BST  $\implies$ ST, see \cite[Remark 2.4]{CDKKLM} or \cite[Lemma 2.3]{BRP}.

The interest in UMST stems from the facts that all 1-complemented FD subspaces 
of a separable and transitive Banach space are UMST (\cite[Theorem 3.2]{wheeling}) 
and that the only previously known UMST norms were the Euclidean norms, which 
made it conceivable that UMST would be a characteristic property of Hilbert spaces.   

Unfortunately this is not the case:
in this Section we prove that every $C^2$ norm on the plane whose dual is also $C^2$ turns out to be UMST.

The remainder of this section is devoted to proving the following.

\begin{theorem}\label{th:UMST}
Every $C^2$ norm on the plane whose unit sphere has strictly positive 
curvature at every point is uniformly micro-semitransitive.
\end{theorem}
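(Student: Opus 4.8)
The plan is to obtain the required automorphisms as the time-$s$ maps of a non-autonomous linear flow that rolls the sphere onto itself. Parametrize $S$ by Euclidean arc length as a $C^2$ periodic curve $\gamma$, fix $x=\gamma(0)$, and for a target $y=\gamma(s)$ with $|s|$ small look for $T$ with $Tx=y$ in the form $T=T(s)$, where $T(\sigma)$ solves $\dot T=M(\sigma)T$, $T(0)={\bf I}_X$, and the operators $M(\sigma)$ are to be chosen so that $M(\sigma)\gamma(\sigma)=\gamma'(\sigma)$ for every $\sigma$. This tangency condition means that $\sigma\mapsto\gamma(\sigma)$ solves the same linear ODE as $\sigma\mapsto T(\sigma)\gamma(0)$, with the same initial value $\gamma(0)$, so by uniqueness $T(s)\gamma(0)=\gamma(s)$, i.e. $T(s)x=y$. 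If in addition each $M(\sigma)$ is \emph{dissipative}, meaning $\langle M(\sigma)w,J(w)\rangle\le 0$ for all $w\in S$, then along any trajectory $z(\sigma)=T(\sigma)w$ one has $\frac{d}{d\sigma}\|z(\sigma)\|=\langle M(\sigma)z,J(z)\rangle/\|z\|\le 0$, so $T(s)$ is a contraction for $s\ge 0$. Finally, if $\sup_\sigma\|M(\sigma)\|<\infty$ then, since $\|T(\sigma)\|\le 1$ on $[0,s]$,
\[ \|T(s)-{\bf I}_X\|=\Big\|\int_0^s M(\sigma)T(\sigma)\,d\sigma\Big\|\lesssim s\lesssim\|x-y\|, \]
which is precisely UMST (with a linear modulus). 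Targets with $s<0$ are reached by running $\gamma$ in the opposite orientation, so it suffices to build the $M(\sigma)$ for one orientation.

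Everything thus reduces to a pointwise problem on $S$: for each $p\in S$ produce a linear $M$, bounded uniformly in $p$, with $Mp$ a nonzero forward tangent to $S$ at $p$ and $\langle Mw,J(w)\rangle\le 0$ for all $w\in S$. Since $\langle J(p),p\rangle=1$ while $\langle J(p),\gamma'\rangle=0$, the pair $\{p,\gamma'\}$ is a basis, and I would take for $M$ (in this basis) the generator of the rotation of the osculating circle at $p$: a $2\times2$ matrix whose single free off-diagonal entry is tuned to the curvature $\kappa(p)$. The purpose of this tuning is a first-order cancellation. Differentiating the identities $\langle J(\gamma(h)),\gamma(h)\rangle\equiv 1$ and $\langle J(\gamma(h)),\gamma'(h)\rangle\equiv 0$ shows that $\langle\gamma'(0),\tfrac{d}{dh}\big|_0 J(\gamma(h))\rangle$ is a strictly positive multiple of $\kappa(p)$, and the curvature-tuned entry is exactly what makes $\varphi(h):=\langle M\gamma(h),J(\gamma(h))\rangle$ satisfy $\varphi(0)=\varphi'(0)=0$.

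The main obstacle is to promote this first-order tangency to the actual sign condition $\varphi(h)\le 0$ for all $h$ --- genuine dissipativity --- both near $p$ and globally, and uniformly in $p$. The subtle point is regularity: a merely $C^2$ norm has duality map $J$ that is only $C^1$ along $S$, so $\varphi$ is only $C^1$ and one cannot close the argument by computing $\varphi''(0)<0$. The route I would take is to replace the differentiation by comparison with circles, where estimates are robust and of exact power type $2$. By Lemmas~\ref{lem:dA<dB} and \ref{lem:oE} together with compactness of $S$, the continuous, strictly positive curvature is bounded away from $0$ and $\infty$, so there are a uniform inner disc and a uniform outer disc tangent to $S$ at every point. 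Squeezing the orbit of the flow between these discs should yield a uniform quadratic inward gain $\varphi(h)\le -c\,h^2+o(h^2)$ near $h=0$ with $c>0$ independent of $p$, while for $h$ bounded away from $0$ the inequality is strict and stable under the same compactness. Manufacturing from these local and global pieces a single, uniformly bounded, dissipative $M(p)$ depending measurably on $p$ is the crux of the proof; once it is available, the flow estimate of the first paragraph finishes the argument, and the self-dual nature of the hypothesis --- by Theorem~\ref{th:ST} and the fact that the dual norm is again $C^2$ with strictly positive curvature --- keeps the whole construction symmetric between $X$ and $X^*$.
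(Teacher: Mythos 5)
Your reduction of the theorem to the existence, at each $p\in S$, of a uniformly bounded linear $M=M(p)$ with $Mp$ a nonzero forward tangent at $p$ and $\langle Mw,J(w)\rangle\le 0$ for all $w$ is a legitimate strategy, and the ODE bookkeeping in your first paragraph is fine. But the existence of such generators is exactly where the theorem lives, you explicitly leave it open (``the crux''), and the one candidate you do propose provably cannot work. On $S$ the functional $J(w)$ is a positive multiple of the outward Euclidean normal $\nu(w)$, so dissipativity forces $\langle Mw,\nu(w)\rangle\le 0$ on $\partial B$; by the divergence theorem $\operatorname{tr}(M)\cdot\operatorname{area}(B)=\int_{\partial B}\langle Mw,\nu(w)\rangle\,dA\le 0$, with equality only if $\langle Mw,\nu(w)\rangle=0$ a.e., i.e.\ only if $e^{tM}$ is a one-parameter group of isometries of $X$ (equivalently: a contraction of determinant $1$ maps $B$ onto a closed convex subset of the same area, hence onto $B$). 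In the plane a nontrivial one-parameter isometry group forces $B$ to be an ellipse. Your $M$ --- the rotation generator of the osculating circle, a matrix with zero diagonal in the basis $\{p,\gamma'(0)\}$ --- is trace-free, hence dissipative only when the norm is Euclidean. Any viable generator must have strictly negative trace, and you neither exhibit one nor show that the global inequality $\varphi(h)=\langle M\gamma(h),J(\gamma(h))\rangle\le 0$ can then be arranged.

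Even granting a trace correction, the local step fails as described: since the norm is only $C^2$, the map $J$ is only $C^1$ along $S$, so after arranging $\varphi(0)=\varphi'(0)=0$ you only know $\varphi(h)=o(h)$, which is not dominated by the quadratic gain $-c\,h^2$ that a negative-trace term contributes; and the proposed squeeze between uniform inner and outer discs controls the position of $S$, not the quantity $\varphi$, which depends on $J$ and $M$ and is not determined by two tangent discs. So there is a genuine gap. For comparison, the paper avoids generators altogether: it works with the explicit automorphisms $L^{ab}_\eps$, shows via Lemma~\ref{lem:1-e} that $L^{ab}_\eps[S]$ has strictly larger curvature than $S$ near the tangency point $b$, and concludes local containment in $B$ from the tangency--curvature comparison of Lemma~\ref{lem:UMST}, with compactness handling the rest. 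To salvage the flow approach you would have to prove directly the existence of negative-trace dissipative tangent generators --- which is essentially an infinitesimal reformulation of the theorem, not a reduction of it --- or import a curvature-comparison step of the same kind.
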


The following ``uniform'' version of Lemma~\ref{lem:dA<dB} is the key 
ingredient of the proof of Theorem~\ref{th:UMST}. It will be applied 
in due course to compare the unit sphere of the given norm with its 
image under a {\em slightly-changing} automorphism.

\begin{lemma}\label{lem:UMST}
Let $\Phi,\Gamma:\R \To \R^2$ be twice differentiable curves parameterized 
by arc length, $K>0$ and $\delta\leq \sqrt{2}/K$. Assume that:
\begin{itemize}
\item[{\rm(a)}] $\Phi$ and $\Gamma$ are tangent at $s=0:\,$ $\Phi(0)=\Gamma(0)=p$ and $ \Phi'(0)=\Gamma'(0) $. 

\item[{\rm(b)}] If $q=\Phi(s), r=\Gamma(t)$ for $|s|,|t|\leq\delta$, 
then $0< \varkappa(\Gamma, r) <\varkappa(\Phi, q)\le K$.
\end{itemize}
Then $\Phi[-\delta,\delta]$ and $\Gamma[-\delta,\delta]$ meet only 
at $p:\,$ $\{\Phi(s): |s|\leq \delta\}\cap \{\Gamma(t): |t|\leq \delta\}=\{p\}$.

\end{lemma}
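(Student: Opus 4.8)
The plan is to reduce everything to a comparison of the two arcs written as graphs over their common tangent line, and then to compare not the second derivatives (which is awkward because of the usual $(1+f'^2)^{3/2}$ factor) but the \emph{tangent angles}, via a first-crossing ODE argument.

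First I would normalize coordinates so that $p$ is the origin and $\Phi'(0)=\Gamma'(0)=(1,0)$; reflecting both curves in the horizontal axis if necessary, I may assume that $\Phi$ turns counterclockwise. Write $\Phi'(s)=(\cos\theta(s),\sin\theta(s))$ with $\theta(0)=0$, and $\Gamma'(t)=(\cos\psi(t),\sin\psi(t))$ with $\psi(0)=0$; then $\theta'=\varkappa(\Phi,\cdot)>0$ and $\psi'=\sigma\,\varkappa(\Gamma,\cdot)$ for a fixed sign $\sigma=\pm1$ (constant because $\varkappa(\Gamma,\cdot)$ never vanishes on the range, by (b)). Here is where the bound $\delta\le\sqrt{2}/K$ enters: since the curvatures are $\le K$, we get $|\theta(s)|\le K|s|\le\sqrt{2}<\pi/2$ for $|s|\le\delta$, and likewise $|\psi(t)|<\pi/2$. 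Hence the horizontal components $\cos\theta(s)$ and $\cos\psi(t)$ stay strictly positive, so each arc is the graph of a twice differentiable function over its range of abscissae: $\Phi\leftrightarrow f$ and $\Gamma\leftrightarrow g$, with $f(0)=g(0)=0$ and $f'(0)=g'(0)=0$. Any common point of the two arcs has some $x$-coordinate $x_0$ lying in both ranges and satisfies $f(x_0)=g(x_0)$, so it suffices to prove that $f-g$ vanishes only at $x=0$.

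The heart of the argument is to track the tangent angles as functions of $x$. Writing $\Theta(x),\Psi(x)$ for these angles (so $f'=\tan\Theta$, $g'=\tan\Psi$, and $\Theta(0)=\Psi(0)=0$), the chain rule together with $d\theta/ds=\varkappa(\Phi,\cdot)$ and $dx/ds=\cos\theta$ gives $\Theta'(x)=\varkappa(\Phi,\cdot)\sec\Theta(x)$ and $\Psi'(x)=\sigma\,\varkappa(\Gamma,\cdot)\sec\Psi(x)$, the curvatures now being read off at the point of the respective curve with abscissa $x$. Set $\Omega=\Theta-\Psi$. Then $\Omega(0)=0$ and, crucially, \emph{at any $x$ where $\Omega(x)=0$} the two $\sec$ factors coincide, so $\Omega'(x)=(\varkappa(\Phi,\cdot)-\sigma\,\varkappa(\Gamma,\cdot))\sec\Theta(x)\ge(\varkappa(\Phi,\cdot)-\varkappa(\Gamma,\cdot))\sec\Theta(x)>0$, the last inequality by hypothesis (b). A standard first-crossing argument now shows $\Omega>0$ for $x>0$ and $\Omega<0$ for $x<0$ throughout the common range: if $\Omega$ returned to $0$ at a first $x_1>0$ it would have $\Omega'(x_1)\le0$, contradicting the displayed strict positivity; the case $x<0$ is symmetric. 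Since $\tan$ is increasing on $(-\pi/2,\pi/2)$ and all angles stay in that interval, $(f-g)'=\tan\Theta-\tan\Psi$ has the same sign as $\Omega$, whence $f-g$ is strictly decreasing on the negative part of the range and strictly increasing on the positive part. As $(f-g)(0)=0$, this forces $f-g>0$ off the origin, so $\Phi$ lies strictly above $\Gamma$ except at $p$, and the two arcs meet only there.

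The step I expect to be the main obstacle is precisely the curvature comparison in the previous paragraph: the naive attempt to deduce $f''>g''$ from $\varkappa(\Phi,\cdot)>\varkappa(\Gamma,\cdot)$ fails because the two graphs have different slopes and hence different normalizing factors $(1+f'^2)^{3/2}$ and $(1+g'^2)^{3/2}$. Passing to the tangent angle dissolves this difficulty, because at a putative crossing of $\Theta$ and $\Psi$ the slopes agree, so the $\sec$ factors cancel and only the (strictly separated) curvatures remain. The remaining ingredients---the reduction to graphs, for which the quantitative bound $\delta\le\sqrt{2}/K$ is exactly what keeps the tangent direction within a right angle of the common tangent, and the monotonicity of $\tan$---are routine.
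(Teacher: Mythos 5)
Your proof is correct, and while it shares the paper's overall skeleton (normalize so that $p$ is the origin with common tangent $e_1$, write both arcs as graphs $f,g$ over the tangent line, then run a first-crossing argument), the decisive comparison step is genuinely different and, in my view, cleaner. The paper argues by contradiction at the level of second derivatives: from a first point $z>0$ with $f(z)=g(z)$ it extracts a first point $u\in(0,z)$ with $f''(u)=g''(u)$ while $f'\geq g'\geq 0$ there, and then the ordering of the slopes makes the normalizing factors $(1+f'^2)^{3/2}\geq(1+g'^2)^{3/2}$ work in its favour in the formula $\varkappa=|f''|/(1+f'^2)^{3/2}$, contradicting hypothesis (b). This requires a two-level ``first zero'' extraction applied to $f-g$ and then to $f''-g''$, where one has to be slightly careful because $f''-g''$ is only a derivative (Darboux, not necessarily continuous). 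Your substitute --- tracking the tangent angles $\Theta,\Psi$ as functions of the abscissa and observing that at any zero of $\Omega=\Theta-\Psi$ the $\sec$ factors cancel, so $\Omega'>0$ there by (b) alone --- collapses this to a single, standard first-crossing argument for a differentiable function, and it handles the orientation of $\Gamma$ (your sign $\sigma$) explicitly rather than by the paper's somewhat terse ``we may and do assume $f''$ and $g''$ are strictly positive.'' Your treatment of the hypothesis $\delta\leq\sqrt2/K$ is also more transparent than the paper's appeal to the Mean Value and Implicit Function Theorems: the bound $|\theta(s)|\leq K|s|\leq\sqrt2<\pi/2$ is exactly what keeps both arcs graphs over the tangent line. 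What the paper's route buys is that it stays entirely within the graph formula \textup{(\ref{eq:kvsf''})} already established in the text; what yours buys is a shorter chain of extremal points and no worry about the regularity of the second derivatives. Both are valid proofs of the lemma.
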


\begin{proof} We may assume that $p$ is the origin and $\Phi'(0)=\Gamma'(0)=e_1$. 
Since $\varkappa(\Phi, q)=\|\Phi''(s)\|_2$ for $q=\Phi(s)$. Applying the Mean Value
Theorem to $\Phi'$ (and then the Implicit Function Theorem) we see that if 
$\sup_{|s|\leq \delta}\|\Phi''(s)\|_2 \leq K$ then there exist $-\delta \leq \alpha<0<\beta\leq \delta $ 
and a function $f:[\alpha,\beta]\To \R$ such that the set $\Phi[-\delta ,\delta ]$ 
agrees with the graph of $f$. For the same reason, taking (b) into account, we 
have that   $\Gamma[-\delta ,\delta ]$ agrees with the graph of certain function 
$g:[\tilde\alpha,\tilde\beta]\To \R$. We have $f(0)=g(0)=f'(0)=g'(0)=0$.  
We may and do assume that $f''$ and $g''$ are strictly positive in their respective 
domains. Let us check that $\Phi(0,\delta ]\cap \Gamma(0,\delta ]=\varnothing$. 
Otherwise there is $x>0$ in the common domain of $f$ and $g$ such that $f(x)=g(x)$.
Put $z=\inf\{x>0: f(x)=g(x)\}$: note that $f(z)=g(z)$ and that $z>0$ since $f(x)>g(x)$ 
for $x>0$ sufficiently small. Clearly $g'(z)\geq f'(z)$ and this implies that there 
is $0<v<z$ such that $f''(v)=g''(v)$. Put $u=\inf\{v>0: f''(v)=g''(v)\}$. Then $u>0$ 
and $f''(u)=g''(u)$ and $f'(v)\geq g'(v)\geq 0$ for $0\leq v\leq u$. Hence letting 
$q=(u,f(u)), r=(u, g(u))$, and applying (\ref{eq:kvsf''}), we obtain
$$
\varkappa(\Phi,q)=\frac{|f''(u)|}{\big(1+f'(u)^2\big)^{3/2}}\leq 
\frac{|g''(u)|}{\big(1+g'(u)^2\big)^{3/2}}=\varkappa(\Gamma,r),
$$
a contradiction.
\end{proof}

\begin{wrapfigure}{r}{0.45\textwidth}
  \begin{center}
    \includegraphics[width=0.33\textwidth]{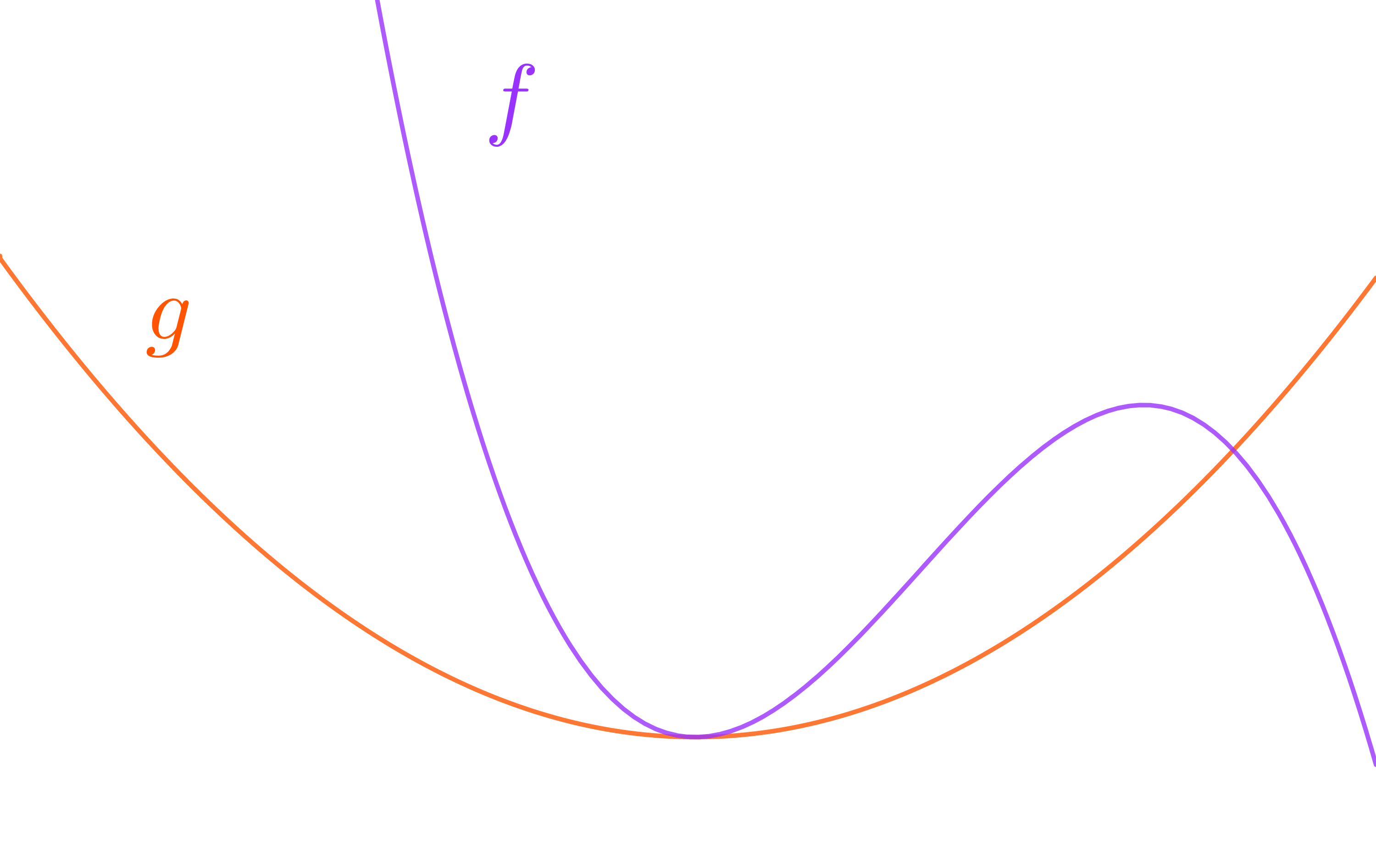}
  \end{center}
  \caption{Functions of the proof of Lemma~\ref{lem:UMST}}
\end{wrapfigure}

Let $\|\cdot\|$ be a (smooth) norm on the plane and let $S$ be its unit sphere 
with the standard (counterclockwise) orientation. If $\Sigma:\R\To\R^2$ is a 
regular parametrization of $S$ preserving the orientation and $a=\Sigma(t)$, put
$
a^\perp= {\Sigma'(t)}\big{/}{\|\Sigma'(t)\|},
$
that is, $a^\perp$ the only point in $S\cap H_a$ such that $a,a^\perp, -a$ is positively oriented.

It is almost obvious that if $a,b$ are close in $S$ and $T\in \Aut_1(X)$ is 
close to the identity and maps $a$ to $b$, then $Ta^\perp=\lambda b^\perp$, 
with $\lambda\in (0,1]$. From now on we denote by $L^{a b}_\eps$ the only 
linear endomorphism of $X$ that maps $a$ to $b$ and $a^\perp$ to $(1-\eps) b^\perp$, 
with the subscript omitted if $\eps=0$. This is actually an automorphism 
unless $\eps=1$ and, quite clearly,
 $$ 
L^{a b}_\eps= L^{b b}_\eps L^{a b}= L^{a b} L^{aa}_\eps .$$

\begin{lemma}\label{lem:abcd} Let $X$ be a 2-dimensional space with smooth norm.
\begin{itemize}
\item[{\rm (a)}]  $\|L^{aa}_\eps\|=1$ for all $a\in S$ and all $0\leq\eps\leq 1$.
\item[{\rm (b)}] If $X$ is strictly convex and  $\|L^{aa}_\eps(x)\|=\|x\|$ for 
some $0<\eps< 1$, then $x$ is proportional to $a$.
\item[{\rm (c)}]  $\|L^{aa}_\eps-{\bf I}_X\|\leq 2 \eps $ for all $a\in S$ and all $\eps>0$.
\item[{\rm (d)}] If the norm of $X$ is $C^2$, then there exists a constant $C$ 
(depending on $X$) such that $\|L^{ab}_\eps-{\bf I}_X\|\leq C\big(\|a-b\|+\eps\big)$ 
for all $a,b\in S$ and all $\eps>0$.
\end{itemize}
\end{lemma}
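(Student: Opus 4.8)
All four parts flow from one elementary identity. Write an arbitrary $z\in X$ as $z = ta + u$, where $t = \langle J(a), z\rangle \in \R$ and $u = z - ta \in H_a := \ker J(a)$; since $H_a$ is spanned by $a^\perp$, the definition of $L^{aa}_\eps$ gives
$$L^{aa}_\eps(z) = ta + (1-\eps)u = (1-\eps)z + \eps\,t\,a, \qquad (L^{aa}_\eps - {\bf I}_X)(z) = -\eps u.$$
Because $\|J(a)\| = 1$ we always have $|t| \le \|z\|$, so both $z$ and $ta$ have norm at most $\|z\|$. Part (a) is then immediate: $L^{aa}_\eps(z)$ is a convex combination of $z$ and $ta$ (as $0\le\eps\le 1$), so $\|L^{aa}_\eps(z)\| \le \|z\|$ by convexity of the norm, while $L^{aa}_\eps(a) = a$ forces equality. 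Part (c) is equally quick from the second identity, since $\|u\| = \|z - ta\| \le \|z\| + |t| \le 2\|z\|$ yields $\|(L^{aa}_\eps - {\bf I}_X)(z)\| = \eps\|u\| \le 2\eps\|z\|$.

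For part (b) I would track the equality case. If $\|L^{aa}_\eps(x)\| = \|x\|$ for some $x\ne 0$ and $0<\eps<1$, then the chain
$$\|x\| = \|(1-\eps)x + \eps t a\| \le (1-\eps)\|x\| + \eps|t| \le \|x\|$$
collapses to equalities, forcing $|t| = \|x\| \neq 0$ and saturating the triangle inequality for the nonzero vectors $(1-\eps)x$ and $\eps t a$. Strict convexity of $X$ then makes these two vectors positive multiples of one another, so $x$ is proportional to $a$, as claimed.

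The substance of the lemma is part (d), which I would deduce from (c) via the factorization $L^{ab}_\eps = L^{bb}_\eps L^{ab}$ recorded before the statement. It gives $L^{ab}_\eps - {\bf I}_X = (L^{bb}_\eps - {\bf I}_X)L^{ab} + (L^{ab} - {\bf I}_X)$, whence by (c)
$$\|L^{ab}_\eps - {\bf I}_X\| \le 2\eps\,\|L^{ab}\| + \|L^{ab} - {\bf I}_X\|,$$
so everything reduces to a single Lipschitz estimate: there is a constant $C'$, depending only on $X$, with $\|L^{ab} - {\bf I}_X\| \le C'\|a - b\|$ (uniform boundedness of $\|L^{ab}\|$ is then free, being $\le 1 + C'\operatorname{diam} S$). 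To obtain it, let $M_a$ be the matrix with columns $a$ and $a^\perp$, so that $L^{ab} = M_b M_a^{-1}$ and $L^{aa} = {\bf I}_X$. The $C^2$ hypothesis (the sphere being a regular $C^2$ curve of nonvanishing curvature) makes the moving frame $a \mapsto (a, a^\perp)$ a $C^1$ map on $S$, since $a^\perp = \Sigma'(t)/\|\Sigma'(t)\|_2$ for a $C^2$ regular parametrization $\Sigma$ and a $C^2$ inverse $a\mapsto t$. Being $C^1$ on the compact sphere, this frame map is Lipschitz, and $\det M_a$ stays bounded away from $0$ (the vectors $a,a^\perp$ are never parallel and $S$ is compact), so $a\mapsto M_a^{-1}$ is Lipschitz as well; hence $(a,b)\mapsto M_b M_a^{-1}$ is Lipschitz, and comparison with its diagonal value ${\bf I}_X$ gives the bound. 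The main obstacle is exactly this step: one must verify that $C^2$-regularity genuinely upgrades the merely continuous dependence $b\mapsto b^\perp$ to a Lipschitz one, \emph{uniformly} over $S$, and that the frame never degenerates. With that in hand, a choice such as $C = 2\max\{1 + C'\operatorname{diam} S,\ C'\}$ closes part (d); everything else is soft.
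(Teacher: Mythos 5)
Your proof is correct and, at bottom, runs on the same fuel as the paper's: the coordinates $x=sa+ta^\perp$ with $|s|\leq\|x\|$ and $|t|\leq 2\|x\|$ (your $u=ta^\perp$), the identity $L^{aa}_\eps(x)-x=-\eps t a^\perp$, and the fact that $a\mapsto a^\perp$ is Lipschitz on $S$ when the norm is $C^2$. Parts (a) and (b) are simply declared clear in the paper, and your convex-combination argument for (a) and your equality-case/strict-convexity argument for (b) are exactly the intended ones; (c) is essentially verbatim the paper's computation. The only genuine divergence is in (d): the paper estimates $\|L^{ab}_\eps(x)-x\|\leq |s|\,\|b-a\|+|t|\eps+|t|\,\|b^\perp-a^\perp\|$ directly in coordinates, whereas you factor $L^{ab}_\eps-{\bf I}_X=(L^{bb}_\eps-{\bf I}_X)L^{ab}+(L^{ab}-{\bf I}_X)$, dispose of the first summand by (c), and reduce the second to a Lipschitz bound for $a,b\mapsto L^{ab}=M_bM_a^{-1}$ via the frame matrices. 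Both routes hinge on the same nontrivial point, namely that $C^2$-smoothness upgrades $b\mapsto b^\perp$ from merely continuous to Lipschitz, uniformly on $S$; the paper dismisses this with a ``clearly'', while you actually justify it ($a^\perp=\Sigma'(t)/\|\Sigma'(t)\|_2$ depends $C^1$ on $a$, hence is Lipschitz on the compact sphere), which is a welcome addition. Your factorization costs an extra, easy, verification that $\det M_a$ stays away from zero; also note that the strictly positive curvature you mention parenthetically is not part of the hypothesis of (d) and is not needed for the Lipschitz claim. Neither point affects correctness.
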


\begin{proof}
(a) and (b) are clear.
To prove (c) note that if $x=sa+ta^\perp$, then $s=\langle Ja, x\rangle$ since 
the coordinate $s(\cdot)$ is completely determined by $s(a)=1, s(a^\perp)=0$. 
In particular $|s|\leq \|x\|$. However $t(\cdot)$ does not agree with $J(a^\perp)$ 
unless $X$ is a Radon plane (one in which Birkhoff orthogonality is symmetric, do 
not worry if you have forgotten or never knew what this means; let us just add 
that while the spheres of Figure~8 are Radon, that of Figure~7 is not). 
Nevertheless, $|t|\leq 2\|x\|$. In particular,
$$
\|x- L^{aa}_\eps(x)\|=\|sa+ta^\perp-sa-t(1-\eps))a^\perp\|=\|t\eps a^\perp\|\leq 2\eps\|x\|.
$$

(d) Clearly, the map $a\in S\longmapsto a^\perp\in S$ is Lipschitz. For  $x=sa+ta^\perp$ we have
\begin{align*}
\|L^{ab}_\eps(x)-x\|= \|sb+(1-\eps)tb^\perp - sa+ta^\perp\|\leq |s|\|b-a\|+|t|\eps + |t|\|b^\perp- a^\perp\|,
\end{align*}
which is enough: $|s|\leq \|x\|$ and $|t|\leq 2\|x\|$.
\end{proof}

Note that if $X$ is assumed to be merely smooth, the proof yields 
$\|L^{ab}_\eps-{\bf I}_X\|\To 0$ as $\max\big(\|a-b\|,\eps\big)\To 0$.

We now analyze the variation of the curvature under certain linear transformations. 
The following result applies in particular when the curve is a sphere and $T=L^{a a}_\eps$.

\begin{lemma}\label{lem:1-e}
Let $a,v$ be linearly independent in $\R^2$ and $0<\eps<1$ and let $T$ be the only 
linear endomorphism such that $T(a)=a, T(v)=(1-\eps)v$. Assume $\Gamma$ is a regular 
curve such that $\Gamma(0)=a, \Gamma'(0)=v/\|v\|_2$ and let $\tilde{\Gamma}=T\,\Gamma$. 
Then $\tilde{\Gamma}(0)=a$ and 
$ 
\varkappa(\tilde{\Gamma},a)=(1-\eps)^{-2} \varkappa({\Gamma},a).$
\end{lemma}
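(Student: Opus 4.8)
The plan is to exploit the parametrization-invariant expression for the curvature of a plane curve,
$$\varkappa(\Gamma,a)=\frac{|\det(\Gamma'(0),\Gamma''(0))|}{\|\Gamma'(0)\|_2^3},$$
where $\det(w,z)=w_1z_2-w_2z_1$ is the determinant of the matrix with columns $w,z$. This agrees with the arc-length definition $\varkappa=\|\Phi''\|_2$ used in the excerpt and with (\ref{eq:kvsf''}) (for a graph $\Gamma(t)=(t,f(t))$ it returns $|f''|/(1+f'^2)^{3/2}$), but it has the decisive advantage of making sense for \emph{any} regular parametrization. That matters because $\tilde\Gamma=T\Gamma$ need not be parametrized by arc length even when $\Gamma$ is. I would first record the two elementary facts on which everything rests: since $T$ is linear, differentiation commutes with it, so $\tilde\Gamma'=T\Gamma'$ and $\tilde\Gamma''=T\Gamma''$; and for any $w,z\in\R^2$ one has $\det(Tw,Tz)=(\det T)\det(w,z)$, because the matrix $[\,Tw\,|\,Tz\,]$ equals $T[\,w\,|\,z\,]$.

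Next I would compute the two quantities that feed the formula. Working in the basis $\{a,v\}$, in which $T$ is diagonal with entries $1$ and $1-\eps$, gives $\det T=1-\eps$, which is positive since $0<\eps<1$. On the other hand the hypothesis $\Gamma'(0)=v/\|v\|_2$ says precisely that $\Gamma'(0)$ is a \emph{unit} eigenvector of $T$ for the eigenvalue $1-\eps$, so that $\tilde\Gamma'(0)=T\Gamma'(0)=(1-\eps)\Gamma'(0)$ and hence $\|\tilde\Gamma'(0)\|_2=1-\eps$ (recall $\|\Gamma'(0)\|_2=1$). That $\tilde\Gamma(0)=T\Gamma(0)=Ta=a$ is immediate.

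Finally I would assemble the pieces. The numerator transforms as
$$|\det(\tilde\Gamma'(0),\tilde\Gamma''(0))|=|\det T|\,|\det(\Gamma'(0),\Gamma''(0))|=(1-\eps)\,|\det(\Gamma'(0),\Gamma''(0))|,$$
while the denominator becomes $\|\tilde\Gamma'(0)\|_2^3=(1-\eps)^3$. Dividing, the factor $(1-\eps)$ from the numerator cancels against $(1-\eps)^3$ from the denominator to leave $(1-\eps)^{-2}$; and since $\|\Gamma'(0)\|_2=1$, the surviving factor $|\det(\Gamma'(0),\Gamma''(0))|$ is exactly $\varkappa(\Gamma,a)$. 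This yields the claimed identity $\varkappa(\tilde\Gamma,a)=(1-\eps)^{-2}\varkappa(\Gamma,a)$.

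There is no deep obstacle here; the only point demanding care is the bookkeeping of which parametrization is in play. Because a linear map distorts speed, $\tilde\Gamma$ is in general not unit-speed at $0$, so one cannot naively invoke $\varkappa=\|\tilde\Gamma''\|_2$ for the image curve. The invariant determinant formula sidesteps this cleanly, and the fact that the tangent $\Gamma'(0)$ lies along the very eigendirection that $T$ contracts is precisely what makes the speed factor $\|\tilde\Gamma'(0)\|_2$ combine so neatly with $\det T$ to produce the single clean power $(1-\eps)^{-2}$.
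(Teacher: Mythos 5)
Your proof is correct, but it follows a genuinely different route from the paper's. The paper first normalises coordinates (so that $a=e_2$ and $v=(1,m)$), writes $T$ as an explicit $2\times 2$ matrix, represents $\Gamma$ near $a$ as the graph of a function $f$ via the Implicit Function Theorem, and then grinds through Buck's parametric curvature formula $\varkappa=\sqrt{\|\Gamma'\|_2^2\|\Gamma''\|_2^2-\langle\Gamma',\Gamma''\rangle^2}\big/\|\Gamma'\|_2^3$ applied to $\tilde\Gamma(x)=((1-\eps)x,\,-\eps m x+f(x))$. You instead use the equivalent determinant form of that same formula (in $\R^2$ Lagrange's identity gives $\|w\|_2^2\|z\|_2^2-\langle w,z\rangle^2=\det(w,z)^2$, so the two expressions coincide) and exploit two structural facts: $\det(Tw,Tz)=(\det T)\det(w,z)$ and the hypothesis that $\Gamma'(0)$ is a unit eigenvector of $T$ with eigenvalue $1-\eps$. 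This buys you a coordinate-free argument with no graph representation, no explicit matrix, and no appeal to the Implicit Function Theorem; it also makes transparent the general transformation law $\varkappa(T\Gamma,Ta)=|\det T|\,\|\Gamma'(0)\|_2^3\,\varkappa(\Gamma,a)/\|T\Gamma'(0)\|_2^3$, of which the lemma is the special case where the tangent direction is the contracted eigendirection. The paper's computation, by contrast, stays entirely within the formulas it has already quoted from Buck and so requires no additional identity. The only point you assert rather than prove is the parametrization-invariance of the determinant formula and its agreement with the arc-length definition $\varkappa=\|\Phi''\|_2$; both are standard (for unit-speed curves $\langle\Phi',\Phi''\rangle=0$, whence $|\det(\Phi',\Phi'')|=\|\Phi''\|_2$), so this is not a gap.
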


\begin{proof}
After applying a suitable linear transformation (a rotation followed by scaling or 
viceversa) we may assume $a=e_2=(0,1)$ and $v=(1,m)$ in which case $T$ is implemented by the matrix
$$
\left(\begin{array}{cc}
1-\eps & 0 \\
-\eps m & 1
\end{array}\right)
$$
By the Implicit Function Theorem we can describe $\Gamma$ near $(0,1)$ as the 
graph of a certain funtion $f$ defined near $0$ and we then have
$$
\varkappa({\Gamma},a)= \frac{|f''(0)|}{\big(1+m^2\big)^{3/2}}
$$
see (\ref{eq:kvsf''}). Now, using the argument of $f$ as a parameter we have
$$
\tilde{\Gamma}(x)=\big( (1-\eps)x, -\eps m x+f(x)	\big), \quad 
\tilde{\Gamma}'(x) =\big( 1-\eps, -\eps m +f'(x)	\big), \quad 
\tilde{\Gamma}''(x)=\big( 0, f''(x)	\big).
$$
One the other hand (see \cite[Equation 8-34 on p. 410]{buck}), since $f'(0)=m$ we have
\begin{align}\label{eq:kparametrics}
\varkappa(\tilde{\Gamma},a) & = 	\frac{\sqrt{\|\tilde{\Gamma}'\|_2^2 \|\tilde{\Gamma}''\|_2^2 - \langle \tilde{\Gamma}', \tilde{\Gamma}''\rangle^2}}{ \|\tilde{\Gamma}'\|_2^3}\Bigg{|}_{x=0} \\ \nonumber
& = 
\frac{\sqrt{ (1-\eps)^2(1+m^2)f''(0)^2 - (1-\eps)^2 m^2 f''(0)^2 }}{ (1-\eps)^3 (1+m^2)^{3/2}}\\ 	
& = 
\frac{\sqrt{ (1-\eps)^2 f''(0)^2}}{ (1-\eps)^3 (1+m^2)^{3/2}}.\qedhere
\end{align}
\end{proof}

The core of the proof of Theorem~\ref{th:UMST} is the following piece:

\begin{lemma} Under the hypotheses of Theorem~\ref{th:UMST} for every $\eps>0$ 
there exists $\delta>0$ such that  $L^{ab}_\eps$ is contractive for all $a,b\in S$ 
such that $\|a-b\|<\delta$.
\end{lemma}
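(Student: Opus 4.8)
Fix $\eps\in(0,1)$. Since $L^{ab}_\eps$ is linear, it is contractive exactly when the convex curve $\tilde S=L^{ab}_\eps(S)$ lies inside $B$; so the goal is to show that, for $\|a-b\|$ small enough and uniformly in $a,b$, we have $\tilde S\subset B$, with contact only at $\pm b$. Two facts guide the argument. First, $L^{ab}_\eps$ carries $a$ to $b$ and the tangent direction $a^\perp$ to $(1-\eps)b^\perp$, so $\tilde S$ and $S$ are tangent at $b$ with the same inward normal. Second, decomposing $L^{ab}_\eps=L^{bb}_\eps L^{ab}$ and applying Lemma~\ref{lem:1-e} to $L^{bb}_\eps$ (which fixes $b$ and contracts $b^\perp$ by $1-\eps$), we get $\varkappa(\tilde S,b)=(1-\eps)^{-2}\,\varkappa(L^{ab}(S),b)$. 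By Lemma~\ref{lem:abcd}(d) with $\eps=0$ we have $L^{ab}\to{\bf I}_X$ as $\|a-b\|\to0$, whence $\varkappa(L^{ab}(S),b)\to\varkappa(S,b)$; so for $\|a-b\|$ small the curvature of $\tilde S$ at $b$ strictly exceeds that of $S$, by a factor close to $(1-\eps)^{-2}>1$. By Lemma~\ref{lem:dA<dB}, $\tilde S$ then dips strictly inside $B$ on both sides of $b$.

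To turn this local information into a uniform one I would first record the global data furnished by Theorem~\ref{th:UMST}: as $X$ is $C^2$ with strictly positive curvature and $S$ is compact, $\varkappa(S,\cdot)$ is continuous with $0<\kappa_{\min}\le\varkappa(S,\cdot)\le\kappa_{\max}$, and $X$ is strictly convex. The estimate $\|L^{ab}_\eps-L^{aa}_\eps\|\le C'\|a-b\|$ (from Lemma~\ref{lem:abcd}(d) and the Lipschitz dependence $a\mapsto a^\perp$) shows $\tilde S$ is $C^2$-close to $L^{aa}_\eps(S)$, whose curvature near $a$ is, by the same computation as in Lemma~\ref{lem:1-e}, bounded below by a factor close to $(1-\eps)^{-2}$ times $\varkappa(S,\cdot)$. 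Hence there are a uniform arc-length radius $\rho>0$ and a uniform curvature bound $K$ for both $S$ and $\tilde S$ such that, once $\|a-b\|$ is small, $\varkappa(\tilde S,q)>\varkappa(S,r)$ for all $q\in\tilde S,\ r\in S$ within arc-length $\rho\le\sqrt2/K$ of $b$. Lemma~\ref{lem:UMST} then gives that these two arcs meet only at $b$; together with the strict inner contact from the first paragraph this forces $\tilde S$ to remain inside $B$ on the whole arc within arc-length $\rho$ of $b$ --- and, by the symmetry $x\mapsto-x$ of the norm, of $-b$.

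It remains to control $\tilde S$ away from $\pm b$, and here I would use strict convexity rather than curvature. By Lemma~\ref{lem:abcd}(a),(b) the map $L^{aa}_\eps$ has norm $1$ and satisfies $\|L^{aa}_\eps x\|<\|x\|$ for every $x\in S$ not proportional to $a$; since the set $\{(a,x):a,x\in S,\ \|x-a\|\ge\rho_0,\ \|x+a\|\ge\rho_0\}$ is compact and $(a,x)\mapsto\|L^{aa}_\eps x\|$ is continuous, there is a uniform $\eta>0$ with $\|L^{aa}_\eps x\|\le1-\eta$ on it. Writing $L^{ab}_\eps=L^{ab}L^{aa}_\eps$ and using $\|L^{ab}\|\le1+C\|a-b\|$, we obtain $\|L^{ab}_\eps x\|\le(1+C\|a-b\|)(1-\eta)\le1$ as soon as $\|a-b\|$ is small, again uniformly in $a,b$. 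Taking $\delta$ below all the thresholds produced above (and choosing $\rho_0$ so that the caps $\|x\mp a\|\le\rho_0$ fall inside the arc-length-$\rho$ neighbourhoods of $\pm b$, which is possible because $L^{ab}_\eps$ is uniformly bi-Lipschitz for fixed $\eps$) then yields $\|L^{ab}_\eps x\|\le1$ for every $x\in S$.

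The main obstacle is uniformity: each estimate must hold simultaneously for all pairs $a,b\in S$, and the two regions --- the curvature-controlled caps around $\pm b$ and the strictly convex equatorial band --- must be matched along a common radius. The delicate point is the curvature comparison feeding Lemma~\ref{lem:UMST}, since it demands control of $\varkappa(\tilde S,\cdot)$ not merely at $b$ but throughout a fixed-size arc; this is exactly where the $C^2$ hypothesis (so that curvature is continuous) and the compactness of $S$ (so that the multiplicative gap $(1-\eps)^{-2}$ survives uniformly) are indispensable.
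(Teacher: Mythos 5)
Your plan is correct and matches the paper's proof in all essentials: the same decomposition $L^{ab}_\eps=L^{bb}_\eps L^{ab}$, the curvature jump $(1-\eps)^{-2}$ from Lemma~\ref{lem:1-e} combined with Lemma~\ref{lem:abcd}(d) and uniform continuity of the curvature to feed Lemma~\ref{lem:UMST} near $\pm b$, and Lemma~\ref{lem:abcd}(a),(b) plus compactness to control the rest of the sphere. The only difference is organisational: the paper runs a compactness/contradiction argument, extracting convergent subsequences to reduce everything to the local statement $(\natural)$ near $b$, whereas you split $S$ into caps around $\pm b$ and an equatorial band and make the uniform estimates explicit and direct.
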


\begin{proof} If we assume the contrary then there exists $\eps>0$ such that for 
every $n\in\mathbb N$ there exist $a_n,b_n\in S$ with $\|a_n-b_n\|<1/n$ and 
$x_n\in S$ such that  $\|L^{a_n b_n}_\eps(x_n)\|> 1$.

Passing to subsequences without mercy we may assume that $(a_n), (b_n), (x_n)$ are 
convergent. Put $a=\lim_n a_n= \lim_n b_n$ and $x=\lim_n x_n$ and let us show that 
 $x=\pm a$. One has $L^{a_n b_n}_\eps(x_n)\To L^{a a}_\eps(x)$ and so 
$\|x\|= \|L^{a a}_\eps(x)\|= 1$ and Lemma~\ref{lem:abcd}(b) shows that $x=\pm a$. 
Now since $L^{ab}_{\eps}$ does not vary if we replace $a$ by $-a$ and $b$ by $-b$ 
we can even assume that $a=b=x$. Hence, it suffices to check the following:

\medskip

$(\natural)$\quad\emph{For each $\eps\in(0,\frac{1}{2})$ there exists $\delta>0$ 
(depending on $\eps$ and $X$) such that if $\|a-b\|<\delta$ then the only common 
point of $S$ and $L^{a b}_\eps[S]$ in the ball of radius $\delta$ and centre $b$ is $b$ itself}.
\medskip

With an eye in Lemma~\ref{lem:UMST} let $K$ be the supremum of the set
\begin{equation}\label{eq:elset}
\big\{ \varkappa\big( L^{a b}_\eps[S], x\big): a,b\in S,  \eps\in [0, \tfrac{1}{2}], x\in L^{a b}_\eps[S]\big\}.
\end{equation}
Since $L^{a b}_\eps[S]$ and $S$ are tangent at $b$ 
the following statement implies $(\natural)$ through Lemma~\ref{lem:UMST}:

\medskip

$(\sharp)$\quad\emph{For each $\eps\in(0,\frac{1}{2})$ there exists 
$\delta>0$ such that if $\|a-b\|<\delta$ then
$
\varkappa(L^{a b}_\eps[S], x)>\varkappa(S, y)
$, 
provided $x\in L^{a b}_\eps[S], y\in S$ and $\|x-b\|,\|y-b\|<\delta$}.
\medskip

We first observe that there is a constant $M$ such that 
$
{\|x\|}/{M}\leq \|L^{a b}_\eps(x)\|\leq M\|x\|
$
for all $x\in X$ and this implies that for each $\eps'>0$ there is $\delta_1=\delta_1(\eps')$ such that if $a,b\in S, \eps\in[0,\frac{1}{2}]$ and $x,y\in  L^{a b}_\eps[S]$, then 
$$
\|x-y\|\leq\delta_1\quad\implies\quad 
\Big|\varkappa(L^{a b}_\eps[S], x)- \varkappa(L^{a b}_\eps[S], y)\Big|<\eps'.
$$
In particular, if $x\in  L^{a b}_\eps[S]$ and $\|x-b\|<\delta_1$, then
$
\big|\varkappa(L^{a b}_\eps[S], x)- \varkappa(L^{a b}_\eps[S], b)\big|<\eps'.
$ 
In a similar vein, Lemma~\ref{lem:abcd}(d) implies that for each $\eps'>0$ there is $\delta_2(\eps')$ such that 
$$
\|a-b\|<\delta_2 \quad\implies\quad \Big|\varkappa(L^{a b}[S], b)- \varkappa(S, a)\Big|<\eps'.
$$
In particular, if $\|a-b\|\leq \min(\delta_1(\eps'),\delta_2(\eps'))$, then 
$$
 \Big|\varkappa(L^{a b}[S], b)- \varkappa(S, b)\Big|\leq
  \Big|\varkappa(L^{a b}[S], b)- \varkappa(S, a)\Big|+  \Big|\varkappa(S,a)- \varkappa(S, b)\Big|\leq 2\eps',
$$
hence,
\begin{equation}\label{eq:una}
\varkappa(L^{a b}[S], b)\geq  \varkappa(S, b) - 2\eps'.
\end{equation}

Now, we fix $\eps\in[0,\frac{1}{2}]$ and take $\eps'< \frac{1}{4}k\eps$, where  
$k>0$ is the infimum of the set in (\ref{eq:elset}).

Assume $\|a-b\|<\delta$, with $\delta\leq \min\big(\delta_1(\eps'), \delta_2(\eps') 	\big)$ and let us compare first $\varkappa(S, b)$ and $\varkappa(L^{a b}_\eps[S], b)$. According to Lemma~\ref{lem:1-e}
$$\varkappa(L^{a b}_\eps[S], b)= \frac{\varkappa(L^{a b}[S], b)}{(1-\eps)^2}.$$
Dividing  by $(1-\eps)^2$ in (\ref{eq:una}) and taking into account that $(1-\eps)^{-2}= 1+2\eps+ 3\eps^2+\cdots$ and that $2\eps\leq 1$ we get
\begin{gather*}
\frac{\varkappa(L^{a b}[S], b)}{(1-\eps)^2} \geq \frac{\varkappa(S, b) - 2\eps'}{(1-\eps)^2} 
\geq (1+2\eps) ( \varkappa(S, b) -2\eps')\\
\geq  \varkappa(S, b) - 2\eps' + 2\eps(k - 2\eps')
\geq  \varkappa(S, b) +k\eps.
\end{gather*}
Hence
$\varkappa(L^{a b}_\eps[S], b) \geq  \varkappa(S, b) +k\eps
$ provided $\|a-b\|<\delta$.
Therefore, if $x\in L^{a b}_\eps[S]$ has $\|x-b\|<\delta$ and  $y\in S$ has $\|y-b\|<\delta$ we obtain
$$
\varkappa(L^{a b}_\eps[S], x) \geq  \varkappa(S, y) +\tfrac{1}{2}k\eps.\qedhere
$$
\end{proof}

Let us present some  UMST norms on the plane in the light of Theorem~\ref{th:UMST}. The first set of examples uses polar coordinates.

\begin{example}
Let $g:\R\To (0,\infty)$ be a $C^2$ and $\pi$-periodic function satisfying the condition
\begin{equation}\label{eq:polar}
2g'(\theta)^2+ g(\theta)^2 - g(\theta)g''(\theta)>0
\end{equation}
for all $\theta$ (equivalently, for $0\leq\theta\leq\pi$). Then the formula  $\|v\|_g=r/g(\theta)$, 
where $(r,\theta)$ are the polar coordinates of $v$, defines a norm on the plane that satisfies the hypotheses of Theorem~\ref{th:UMST} and is therefore uniformly-micro-semitransitive. 

All norms on the plane satisfying the hypotheses of Theorem~\ref{th:UMST} arise in this way.
\end{example}

\begin{wrapfigure}{r}{0.45\textwidth}
  \begin{center}
    \includegraphics[width=0.32\textwidth]{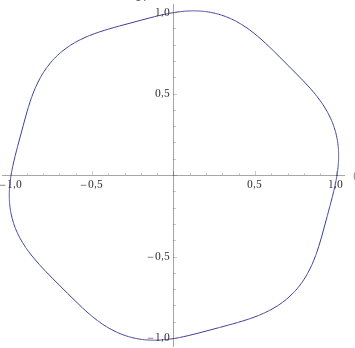}
  \end{center}
  \caption{The ``polar'' curve $r=1+\tfrac{1}{17}\sin(4\theta)$ is the sphere of a UMST norm. The resemblance to  Grandpa Pig is apparent.}
\end{wrapfigure}

Indeed if
 $\Gamma$ is a curve with polar representation $r=g(\theta)$, with $g:I\To (0,\infty)$ twice differentiable, then 
 the curvature of $\Gamma$ at the point of polar coordinates $(g(\theta),\theta)$ is given by
$$
\frac{|2g'(\theta)^2+ g(\theta)^2 - g(\theta)g''(\theta)|}{\big(g(\theta)^2+g'(\theta)^2\big)^{3/2}}
\hspace{220pt}$$
If in addition $g$ is $\pi$-periodic and satisfies (\ref{eq:polar}) then the bounded domain enclosed by $\Gamma$ is automatically convex ($\Gamma$ has no point of inflection) and thus $\Gamma$ is the unit sphere of a norm on $\R^2$ which is actually given by 
$\|v\|_g=r/g(\theta)$. Of course $\|\cdot\|_g$ is $C^2$ off the origin if and only if $g$ is $C^2$. The converse is obviously true.

\medskip

Note that if $g>g''$ (pointwise), then $(\ref{eq:polar})$ holds.
The first functions that satisfy this condition that come to mind are $g(\theta)= 1+ \eps \sin(n\theta)$ with $n$ even and $\eps<n^{-2}$; see Figure~7.

\begin{wrapfigure}{r}{0.45\textwidth}
  \begin{center}
    \vspace{-10pt}\includegraphics[width=0.30\textwidth]{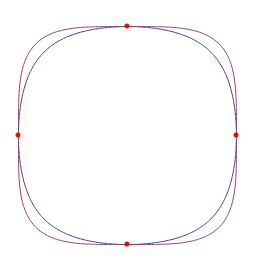}
  \end{center}
  \caption{The spheres of the norm $\|\cdot\|_4$ in red and  $\sqrt{\frac{1}{2}\|\cdot\|_4^2+\frac{1}{2}\|\cdot\|_2^2}$ in blue. The later is UMST, the former is not.}
\end{wrapfigure}

A different kind of example is the following: assume $\|\cdot\|$ is $C^2$ off the origin 
on $\R^n$. Given $\eps>0$ put $\|\cdot\|_{(\eps)}=\sqrt{\|\cdot\|^2+ \eps\|\cdot\|_2^2}$ This new norm is again $C^2$ and the ``new'' duality
map, which is given by
$
J_{\text{new}}= \tfrac{1}{2} d^2 \|\cdot\|_{(\eps)}^2= J_{\text{old}}+ \eps {\bf I}
$
is positive definite since $J_{\text{old}}$, being the Hessian of a convex function, is positive semi-definite. When $n=2$ this implies that 
 all points of the sphere have strictly positive curvature and so $\|\cdot\|_{(\eps)}$ is UMST.

We do not know if the converse of Theorem~\ref{th:UMST} is true, although
many of the implications of the proof are clearly reversible.
Assume $X$ is a UMST plane and let $\Sigma:I\To S$ be a parametrization of the 
unit sphere by the arc-length relative to the usual Euclidean norm.
It is relatively easy to see that there is $I_0\subset I$, whose complement 
has measure zero, where $\Sigma$ is twice differentiable and $\Sigma'':I_0\To X$ 
is uniformly continuous. In particular the curvature, which is defined on 
$\Sigma(I_0)$, has a continuous extension to $S$. Thus, the question seems to 
be whether UMST planes are {\em absolutely smooth} in the sense of \cite{banakh}.

\medskip

On the other hand, the theorem is crying out for a generalisation for 
arbitrary (finite) dimension. Of course we need a different hypothesis in ``the general case''. But
for a Banach space $X$ (necessarily isomorphic to a Hilbert space) the following statements are equivalent:
\begin{itemize}
\item  Both $X$ and $X^*$ are $C^2$-smooth off the origin.
\item  The function $\varphi:X\To \R$ given by $\varphi(x)=\frac{1}{2}\|x\|^2$ is $C^2$-smooth and $d^2_x\varphi$ is positive definite for all $x\in X$. 
\end{itemize}
(See \cite[Proof of Proposition~3.7]{wheeling}.) These in turn are equivalent to the hypothesis of Theorem~\ref{th:UMST} when $X$ has dimension 2 and we conjecture that they are equivalent to UMST in finite dimensions.

\end{document}